\definecolor{aleacolor}{rgb}{0.16,0.59,0.78}
\renewcommand{\cite}{\citet}
\theoremstyle{plain}
\newtheorem{theorem}{Theorem}[section]                                          
\newtheorem{proposition}[theorem]{Proposition}                          
\newtheorem{corollary}[theorem]{Corollary}
\newtheorem{conjecture}[theorem]{Conjecture}
\theoremstyle{definition}
\theoremstyle{remark}
\newtheorem{remark}[theorem]{Remark}
\makeatletter \@addtoreset{equation}{section} \makeatother
\def\Brown{B}
\newcommand{\Evc}[2]{\E \left ( #1 \middle | #2 \right )}
\newcommand{\abs}[1]{\left\vert#1\right\vert}
\newcommand{\setb}[2]{\left \{ #1 \ \middle | \ #2 \right \} }
\newcommand{\wh}[1]{\widehat{#1}}
\def\E{\mathbb E}
\def\e{\mathrm e}
\def\im{\mathrm i}
\def\di{\mathrm d}
\def\1{{\mathsf 1}}
\def\tem{\textemdash}
\def\Pr{\operatorname{Prob}} 
\def\diam{\operatorname{diam}} 
\def\dist{\operatorname{dist}}   
\def\ra{\rightarrow}
\begin{document}

\title{Trapping planar Brownian motion in a non circular trap}
\author{Jeffrey Schenker}
\address{Department of Mathematics \newline Michigan State University \newline 619 Red Cedar Road\newline East Lansing MI 48824}

\email{jeffrey@math.msu.edu}
\urladdr{\url{http://math.msu.edu/~jeffrey}}
\thanks{Supported by NSF Award DMS-1411411 {\it Interpreting Data from Trapping of Stochastic Movers}.}
\subjclass[2010]{60J65,60J70,92B99} 
\keywords{Brownian motion, Green's function, Conformal radius, Hitting probabilities.}

\begin{abstract}
	Brownian motion in the plane in the presence of a ``trap" at which motion is stopped is studied .  If the trap $T$ is a connected compact set, it is shown that the probability for planar Brownian motion to hit this set before a given time $t$ is well approximated even at short times by the probability that Brownian motion  hits a disk of radius $r_T$ equal to the conformal radius of the trap $T$.\end{abstract}
\maketitle

\section{Introduction} The ability to compute hitting probabilities for random movers like Brownian motion in the plane  is a problem of intrinsic interest and of importance in a number of applications.  In particular, it is fundamental to current work on the theory of trapping of randomly moving organisms such as small insects \cite{Miller2015,TrappingPaper}, with applications to chemical ecology and agricultural pest management.

Consider a connected, compact set $T$ in the plane, that is neither empty nor a single point.
Let $P_T(z,t)$ denote the probability that a standard planar Brownian motion started at $z$ hits $T$ by time $t$.  If $T$ is a disk, there is an exact formula for $P_T$ as an integral of Bessel functions \cite{Wendel1980}; see eq.\ \eqref{eq:integralformula} below \tem \ this identity goes back to work of \cite{Nicholson1921} on heat flow.
For other compact sets no exact formula is known, although the following asymptotic 
\begin{equation}\label{eq:Hunt}\lim_{t \rightarrow \infty} \ln t \left ( 1-P_T(z,t) \right ) \ = \ 2 \pi H_T(z) 	
\end{equation}
holds in general. Here $H_T(z)$ is the \emph{Green's function for $T$ with pole at $\infty$}, which is the unique harmonic function in the exterior of $T$ that vanishes as $z\rightarrow T$ and has the asymptotic growth $H_T(z) \sim \frac{1}{\pi} \ln |z|$ as $z\rightarrow \infty$.  Eq.\ \eqref{eq:Hunt} was conjectured by Kac and proved by \cite{Hunt1956}. See also \cite{Collet2000}, in which asymptotics for the sub-probability distribution of the movers that have avoided the trap are derived in 2 and higher dimensions. Hunt  gave no explicit error bound, but we will see below that his argument is easily extended to show that the error made in the approximation 
\begin{equation}\label{eq:Huntapprox} p_T(z,t) \ \approx \ 1 - \frac{2\pi H_T(z)}{\ln t}
\end{equation}
is of order $H_T(z) \nicefrac{\ln \ln^2 t}{\ln^2 t}$. 

In the ecological applications mentioned above, it has been important to compute probabilities over intermediate time scales, which are not in the regime covered by the asymptotic result \eqref{eq:Hunt}.  In those applications $T$ is a ``trap'' at which the movers are captured.  Typically the trap consists of two parts: a physical trap which ensnares the organisms that contact it and a chemical plume of an attractant, such as a pheromone, emitted by the physical trap.  In general the plume is of unknown shape and size, but upon entering the plume an organism ceases to engage in random search and instead flies upwind to the physical trap at which it is captured. Thus the difficulty in computing $P_T$ is compounded by the fact that we do not even know the precise set $T$! 

One could also worry that the plume may be dynamic, and indeed this is surely the case in general.  Although the molecular weight of the attractant is often quite large, causing the plume to have very low diffusivity, passive transport of the plume by shifts in wind direction is probably relevant in the field. However, it seems likely that such dynamical behavior of the plume will introduce an additional averaging that will only enhance the phenomenon described below.  Thus for the present work we restrict our attention to static plumes.  

Progress is possible on this problem because we are primarily interested in the response of a uniformly distributed population to the trap, for which it suffices to know circular averages of $P_T$:
$$p_T(r,t) \ := \ \frac{1}{2\pi r} \int_{|z|=r} P_T(z,t) |\di z|,$$
which give the hitting probability for a mover with initial condition randomly distributed on a circle of radius $r$.  Here $|\di z|$ denotes arc-length measure on the circle $|z|=r$.
In applications, we can measure $p_T(r,t)$ by releasing individuals uniformly on a circle of a given radius \cite{TrappingPaper}. Averaging the approximation eq.\ \eqref{eq:Huntapprox} we obtain
\begin{equation}\label{eq:avgHuntapprox}
	p_T(r,t) \ \approx \ 1 - \frac{1}{2\pi r} \int_{|z|=r} \frac{2\pi H_T(z)}{\ln t} |\di z| \ = \ 1 - \frac{2 \ln \nicefrac{r}{r_T}}{\ln t} ,
\end{equation}
where $r_T$ is the so-called \emph{conformal radius}, or \emph{logaritheoremic capacity}, of $T$, defined by the relation\footnote{The equality of eq.\ \eqref{eq:avgHuntapprox} follows from the mean value property for the harmonic function $H_T(z)-\ln |z|$ defined in a neighborhood of $z=\infty$.}
$$\ln r_T \ := \ \lim_{z \rightarrow \infty} \pi H_T(z) - \ln |z|.$$
The right hand side of eq.\ \eqref{eq:avgHuntapprox} is the same for any trap $T$ with conformal radius $r_T$.  In particular, eq.\ \eqref{eq:avgHuntapprox} implies that, asymptotically, $p_T$ agrees with the hitting probability for a disk of radius $r_T$. The aim of this paper is to demonstrate that the approximation is greatly improved and becomes valid at much shorter time scales if we use the exact probability to hit a disk of radius $r_T$ in place of $1 - \nicefrac{2\ln \nicefrac{r}{r_T}}{\ln t}.$

 Let $p_{r_T}^D(r,t)$ denote the probability that a standard planar Brownian motion released at a distance $r$ from the origin hits a disk of radius $r_T$ centered at the origin before time $t$.  Abelian averages of $p_{r_T}^D$ are given by the formula \cite{Wendel1980}:
\begin{equation}\label{eq:diskf}f_{r_T}^D(r,\tau) \ = \ \frac{1}{\tau} \int_0^\infty \e^{-\nicefrac{t}{\tau}} p_{r_T}^D(r,t) \di t \ = \ \frac{K_0(\sqrt{\nicefrac{2 r^2}{\tau}})}{K_0(\sqrt{\nicefrac{2r_T^2}{\tau}})}	
\end{equation}
where $K_0$ is the order zero modified Bessel function of the second kind.\footnote{Eq.\ \eqref{eq:diskf} follows from the fact that $f_{r_T}^D$ solves the boundary value problem $$\frac{2}{\tau} f_{r_T}^D(r,\tau) = \partial_r^2 f_{r_T}^D(r,\tau) + \frac{1}{r}\partial_r f_{r_T}^D(r,\tau)$$
with $f_{r_T}^D(r_T,\tau)=1$ and $f_{r_T}^D(r,\tau) \ra 0$ as $r\ra \infty$. See \S\ref{sec:proofs} bellow and also \cite{Nicholson1921,Carslaw1940a}.}
  The main new result presented here is the following
\begin{theorem}\label{theorem:new} Let $f_T(r,\tau)$ denote the Abelian mean of $p_T(r,t)$,
$$ f_T(r,\tau) \ = \ \frac{1}{\tau}\int_0^\infty \e^{\nicefrac{t}{\tau}} p_T(r,t) \di t.$$
Let $r_T$ be the conformal radius of $T$ and let 
$$d = \max\left ( \diam (T),\e^{\gamma} r_T\right ) $$
where $\gamma = 0.5772\cdots$ is the Euler-Mascheroni constant.
If $\tau > \frac{\e}{2} d^2$  and $|z| \ge r_0 := \max_{w\in T} |w|$ then
\begin{equation}\label{eq:newestimate}\abs{f_T(r,\tau) - f_{r_T}^D(r,\tau)} \ \le \ 2.9 \, \frac{d^2}{\tau} f_{r_T}^D(r,\tau). 	
\end{equation}
\end{theorem}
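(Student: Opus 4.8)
The plan is to pass to the Laplace transform, reduce the statement to a single $r$-free inequality, and then run a potential-theoretic argument built on the expansion $K_0(x)=\ln\tfrac{2}{\e^\gamma x}+S(x)$. First, the Abelian mean is the Laplace transform of the first hitting time $\sigma_T$ of $T$, $f_T(z,\tau)=\tfrac1\tau\int_0^\infty\e^{-t/\tau}P_T(z,t)\,dt=\E_z[\e^{-\sigma_T/\tau}]$, so with $\mu:=\sqrt{2/\tau}$ it is the bounded solution of $(\Delta-\mu^2)f_T=0$ on $\mathbb{C}\setminus T$ equal to $1$ on $\partial T$ and vanishing at infinity; likewise $f^D_{r_T}(r,\tau)=K_0(\mu r)/K_0(\mu r_T)$. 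Expanding $f_T$ in Fourier modes in the exterior of $\{|z|\le r_0\}$, where the decaying solutions are spanned by the $K_{|n|}(\mu r)\e^{in\theta}$, shows that the circular average retains only the zeroth mode: $f_T(r,\tau)=a_0(\tau)K_0(\mu r)$ for $r\ge r_0$. Hence $f_T(r,\tau)/f^D_{r_T}(r,\tau)=a_0(\tau)K_0(\mu r_T)$ is independent of $r$, and \eqref{eq:newestimate} reduces to the single bound $|a_0(\tau)K_0(\mu r_T)-1|\le 2.9\,d^2/\tau$. (I use throughout that $0\in\mathrm{conv}(T)$, which is implicit in comparing circular averages about the origin with a disk centred there and holds in particular if $0\in T$, so that $r_0\le\diam(T)\le d$.)

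Next I would represent $f_T$ by a single-layer potential: extending $f_T(\cdot,\tau)$ by $1$ inside $T$ produces a continuous function whose image under $-(\Delta-\mu^2)$ is a finite positive measure $\nu$ supported on $T$, of finite logarithmic energy (hence not charging polar sets), and — since $\tfrac1{2\pi}K_0(\mu|\cdot|)$ is the fundamental solution and $f_T$ decays — one has $f_T(z,\tau)=\tfrac1{2\pi}\int_T K_0(\mu|z-\zeta|)\,d\nu(\zeta)$ with $\tfrac1{2\pi}\int_T K_0(\mu|z-\zeta|)\,d\nu(\zeta)=1$ at quasi-every $z\in T$. By Graf's addition theorem $\tfrac1{2\pi}\int_0^{2\pi}K_0(\mu|r\e^{i\theta}-\zeta|)\,d\theta=K_0(\mu r)I_0(\mu|\zeta|)$ for $r>|\zeta|$, so $a_0(\tau)=\tfrac1{2\pi}\int_T I_0(\mu|\zeta|)\,d\nu(\zeta)$; with $m:=\nu(T)$ and $1\le I_0(\mu|\zeta|)\le I_0(\mu r_0)$ on $\supp\nu$, this sandwiches $a_0(\tau)$ between $\tfrac m{2\pi}$ and $\tfrac m{2\pi}I_0(\mu r_0)$. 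The crux is to identify $m$. Write $K_0(x)=\ln\tfrac2{\e^\gamma x}+S(x)$ with $S(x)=-(\ln\tfrac x2+\gamma)(I_0(x)-1)+\sum_{k\ge1}\tfrac{(x/2)^{2k}}{(k!)^2}h_k$, $h_k=1+\tfrac12+\cdots+\tfrac1k$; since $\tau>\tfrac{\e}{2}d^2$ forces $\mu^2 d^2<4/\e$, all relevant arguments $\mu|z-\zeta|$ ($z,\zeta\in T$) lie in $[0,2/\sqrt{\e}]$, an interval on which $S\ge0$ and $\ln\tfrac2{\e^\gamma\mu r_T}>\tfrac12$ (this last point is exactly where the term $\e^\gamma r_T$ in the definition of $d$ enters). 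Pairing the identity $\tfrac1{2\pi}\int K_0(\mu|z-\zeta|)\,d\nu(\zeta)=1$ against the logarithmic equilibrium measure $\omega_T$ of $T$, whose potential equals $-\ln r_T$ quasi-everywhere on $T$, gives $m\ln\tfrac2{\e^\gamma\mu r_T}=2\pi-E_S$ with $E_S:=\iint S(\mu|z-\zeta|)\,d\nu(\zeta)\,d\omega_T(z)\ge0$; pairing it against $\nu$ itself, and using that $\omega_T$ minimizes the logarithmic energy among probability measures on $T$, gives the error-free bound $2\pi m\ge m^2\ln\tfrac2{\e^\gamma\mu r_T}$, i.e.\ $m\le 2\pi/\ln\tfrac2{\e^\gamma\mu r_T}$.

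Combining these with $K_0(\mu r_T)=\ln\tfrac2{\e^\gamma\mu r_T}+S(\mu r_T)$ exhibits $a_0(\tau)K_0(\mu r_T)$ as a product of three factors of the form $1+O(\mu^2 d^2)$ — namely $1-\tfrac{E_S}{2\pi}$, then $1+S(\mu r_T)/\ln\tfrac2{\e^\gamma\mu r_T}$ (coming from $\tfrac m{2\pi}K_0(\mu r_T)$), then $1+\tfrac1m\int_T(I_0(\mu|\zeta|)-1)\,d\nu$ (coming from the $a_0$--$m$ sandwich) — so it remains only to bound these factors numerically, using $0\le I_0(x)-1\le\tfrac{x^2}4\e^{x^2/4}$ and a comparable bound on $S(x)$ for $x\in[0,2/\sqrt{\e}]$, and multiply out to obtain $|a_0(\tau)K_0(\mu r_T)-1|\le C\mu^2 d^2=2C\,d^2/\tau$ with a numerical $C\le 1.45$. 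I expect this final bookkeeping to be the main obstacle. Its delicate point is that $S(x)/x^2\sim\tfrac14\ln\tfrac1x$ is unbounded as $x\to0$, so $E_S$ is not $O(\mu^2 d^2)$ through any pointwise bound on $S$ alone; instead one uses $E_S\le m\,\sup_{[0,\mu\diam T]}S$ together with the fact that the same hypothesis $\tau>\tfrac{\e}{2}d^2$ that controls $\diam T$ also forces $m\le 2\pi/\ln\tfrac2{\e^\gamma\mu r_T}=O(1/\ln\tfrac1\mu)$, whence the two logarithms cancel and $E_S=O(\mu^2 d^2)$ uniformly. (A separate, routine obstacle is to justify the single-layer representation and the quasi-everywhere potential identities for a general connected compact $T$ — e.g.\ a slit — by checking that the relevant exceptional sets are polar and hence uncharged by $\nu$.)
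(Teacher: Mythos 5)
Your route is genuinely different from the paper's and, as far as I can see, structurally sound. The paper's proof never introduces the Riesz measure $\nu$ of $F_T$; instead it starts from the probabilistic ``balayage'' identity
$$K_0(\sqrt{2\lambda}\,|z-w|) \ = \ \Evc{\e^{-\lambda\tau_T(\Brown)}K_0(\sqrt{2\lambda}\,|\zeta_T(\Brown)-w|)}{\Brown(0)=z}, \qquad w\in\partial T,$$
integrates this against the harmonic measure at infinity $\mu$ (which is your $\omega_T$), and then takes circular averages, obtaining
$f_T(r,\nicefrac1\lambda)=\left[\tfrac{\int I_0(\sqrt{2\lambda}|w|)\,\di\mu(w)}{1+A_T(r)}\right] \tfrac{K_0(\sqrt{2\lambda}r)}{K_0(\sqrt{2\lambda}r_T)}$ with $A_T(r)$ coming from the expansion of $K_0$ under the expectation. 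Your single-layer / Riesz-measure representation $F_T(z)=\tfrac1{2\pi}\int K_0(\mu|z-\zeta|)\,\di\nu(\zeta)$, followed by the two pairings (against $\omega_T$ to extract $m=\nu(T)$ via $mL=2\pi-E_S$, and against $\nu$ for the energy inequality) is a different decomposition of the same information. What it buys you: your error factors $E_S/2\pi$, $\delta_1$, and $S(\mu r_T)/L$ are all $z$-independent from the start, so the circular-averaging step is genuinely decoupled from the error bookkeeping; what it costs is exactly the regularity issue you flag — you must justify that $-(\Delta-\mu^2)F_T$ is a finite positive measure of finite energy for a general connected compact $T$, whereas the paper's probabilistic identity avoids ever mentioning $\nu$. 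Both approaches use the same Bessel addition formula and the same expansion $K_0(x)=\ln\tfrac{2}{\e^\gamma x}+S(x)$ (the paper's Theorem~\ref{theorem:K0theorem} with $M=0$), and both rely implicitly on $0\in\mathrm{conv}(T)$ so that $r_0\le\diam T\le d$; the paper does not state this explicitly either, so you are being more careful than the source on that point.

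The one thing I would not call ``routine'' is what you have deferred: extracting the numerical constant $2.9$ from the factorization $(1-\tfrac{E_S}{2\pi})(1+\delta_1)(1+\tfrac{S(\mu r_T)}{L})$. The hypothesis only gives $\mu d\le 2/\sqrt{\e}$, and since $2/\sqrt{\e}>2\e^{-\gamma}$ the paper's bound eq.~\eqref{eq:K0upperboundM=0} does not literally cover the whole range of arguments of $S$; the factor $E_S/2\pi$ is not small (it can exceed $0.5\,\mu^2 d^2$, i.e.\ more than $1$ in absolute size near the boundary $\tau=\tfrac{\e}{2}d^2$), and the higher-order cross terms in the product cannot be discarded wholesale. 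Rough estimates of the three factors do land comfortably under $1.45\,\mu^2d^2$, so I believe the constant is attainable by your method, but as written this is a stated plan rather than a completed estimate — exactly the same spot where the paper does its own careful, if terse, arithmetic with $0.79$, $0.41$, and the denominator $1-0.79\,d^2/\tau$.
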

The main advantages of the bound provided by Theorem \ref{theorem:new} are twofold.  First, it estimates the \emph{relative error} of approximating $f_T(r,t)$ by $f_{r_T}^D(r,\tau)$.  Second, the estimate provided is uniform for all $r\ge r_0$. Thus for $\tau \ge 30 d^2$, say, we know that $f_{r_T}^D(r,\tau)$ estimates $f_T(r,\tau)$ to within one part in ten \emph{for every $r$}.

By way of contrast, in the following Theorem, which is the basis for eq.\ \eqref{eq:Hunt}, the error is absolute and not uniform in $|z|$. 
\begin{theorem}\label{theorem:Hunt}
Let $F_T(z,\tau)$ denote the Abelian mean of $P_T(z,t)$,
$$ F_T(z,\tau) \ = \ \frac{1}{\tau}\int_0^\infty \e^{\nicefrac{t}{\tau}} P_T(z,\tau) \di t.$$
Let $r_T$ be the conformal radius of $T$, $d$ be as in Theorem \ref{theorem:new},  $$\tau_0 = \frac{1}{2} \e^{2\gamma} r_T^2 $$
and 
$$R_z = \max\left(\sup_{w\in T} |w-z|, \e^{\gamma} r_T \right ). $$
If $\tau > \frac{\e}{2} d^2 $ then
\begin{equation} F_T(z,\tau) \ \ge \ 1 \ - \ \frac{2 \pi H_T(z)}{\ln \nicefrac{\tau}{\tau_0} } \ - \ 0.8 \ \frac{d^2}{ \tau}    .\label{eq:Huntlower}	
\end{equation}
If $\tau > \frac{\e}{2} R_z^2$  then
\begin{equation}\label{eq:Huntupper} F_T(z,\tau) \ \le \ 1 \ - \ \frac{2 \pi H_T(z)}{\ln \nicefrac{\tau}{\tau}_0  } \ + \ 0.8 \ \frac{R_z^2}{\tau} .	
\end{equation}
\end{theorem}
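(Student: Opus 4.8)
The plan is to convert the parabolic problem into an elliptic (resolvent) one and then into a question of logarithmic potential theory. Replacing $T$ if necessary by the complement of the unbounded component of its complement — which changes neither $H_T$, $r_T$ nor $F_T(z,\cdot)$ for $z$ exterior to $T$ — we may assume $\Omega:=\mathbb{C}\setminus T$ is connected; since $T$ is a continuum, every boundary point is then regular. Writing $\sigma_T$ for the first hitting time of $T$, one has $F_T(z,\tau)=\E_z[\e^{-\sigma_T/\tau}]$, and by Feynman--Kac together with the maximum principle this is the unique bounded solution of
\[
\Big(\Delta-\tfrac2\tau\Big)F_T=0\ \text{ in }\ \Omega,\qquad F_T\equiv 1\ \text{ on }\ \partial T,\qquad F_T(z)\to0\ \text{ as }\ z\to\infty .
\]
Put $\kappa:=\sqrt{2/\tau}$. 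The potential theory of the Yukawa (Bessel) kernel $K_0(\kappa|\cdot|)$ — a constant multiple of the fundamental solution of $\kappa^2-\Delta$ — furnishes a unique positive measure $\nu=\nu_{T,\kappa}$ carried by $\partial T$ with
\[
F_T(z,\tau)=\int_{\partial T}K_0(\kappa|z-\xi|)\,d\nu(\xi)\qquad\text{and}\qquad\int_{\partial T}K_0(\kappa|z-\xi|)\,d\nu(\xi)=1\ \text{ for }\ z\in\partial T .
\]
For a disk of radius $r_T$ this $\nu$ is uniform on the circle and one recovers \eqref{eq:diskf}; in general, the whole problem is now reduced to estimating $\|\nu\|$ and the integral above.

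Next, peel off the principal term. Write $K_0(x)=L(x)+E(x)$ with $L(x):=\ln(2/x)-\gamma$; then $E(0^+)=0$, and since $xK_1(x)$ decreases from $1$ one has $E'(x)=\tfrac1x-K_1(x)>0$, so $E$ is positive and increasing on $(0,\infty)$. The $L$-part integrates explicitly:
\[
\int_{\partial T}L(\kappa|z-\xi|)\,d\nu(\xi)=\|\nu\|\Big(\ln\tfrac2\kappa-\gamma-h_\mu(z)\Big),\qquad h_\mu(z):=\int\ln|z-\xi|\,d\mu(\xi),\quad \mu:=\nu/\|\nu\| .
\]
Compare $h_\mu$ with the logarithmic potential of the classical equilibrium measure $\mu_T$ of $T$, which satisfies $\int\ln|z-\xi|\,d\mu_T(\xi)=\pi H_T(z)+\ln r_T$ identically. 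Evaluating the normalization on $\partial T$ shows that $h_\mu$ differs from a constant on $\partial T$ by at most $E(\kappa\diam(T))$. Since $h_\mu-(\pi H_T+\ln r_T)$ is harmonic in $\Omega$ and vanishes at $\infty$, the maximum principle (equivalently, the domination principle of potential theory) together with the symmetric identity $\int h_\mu\,d\mu_T=\int(\pi H_T+\ln r_T)\,d\mu=\ln r_T$ — which uses $H_T\equiv0$ on $\partial T$ and fixes the free additive constant — yields
\[
\bigl|h_\mu(z)-\pi H_T(z)-\ln r_T\bigr|\le 2E(\kappa\diam(T))\quad(z\in\Omega),\qquad \tfrac1{\|\nu\|}=\tfrac12\ln\tfrac\tau{\tau_0}+O\!\bigl(E(\kappa\diam(T))\bigr),
\]
using $\ln\tfrac2\kappa-\gamma-\ln r_T=\tfrac12\ln(\tau/\tau_0)$ — which is exactly why $\tau_0=\tfrac12\e^{2\gamma}r_T^2$; in particular $\|\nu\|\le 2/\ln(\tau/\tau_0)$. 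Feeding these back,
\[
F_T(z,\tau)=\|\nu\|\Bigl(\tfrac12\ln\tfrac\tau{\tau_0}-\pi H_T(z)\Bigr)+\int_{\partial T}E(\kappa|z-\xi|)\,d\nu(\xi)+O\!\bigl(\|\nu\|E(\kappa\diam(T))\bigr),
\]
whose principal term is $1-2\pi H_T(z)/\ln(\tau/\tau_0)$ after replacing $\|\nu\|$ by its leading value $2/\ln(\tau/\tau_0)$.

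Finally, the two inequalities drop out from the sign of $E$. For \eqref{eq:Huntlower} one discards the nonnegative term $\int E\,d\nu$, uses $\|\nu\|\le 2/\ln(\tau/\tau_0)$ (so one may assume $1-2\pi H_T(z)/\ln(\tau/\tau_0)\ge0$, the asserted bound being vacuous otherwise) and the matching lower bound for $\|\nu\|$, and absorbs the $O(\|\nu\|E(\kappa\diam(T)))$ correction into $-0.8\,d^2/\tau$. For \eqref{eq:Huntupper} one instead bounds $\int_{\partial T}E(\kappa|z-\xi|)\,d\nu(\xi)\le E(\kappa R_z)\|\nu\|$, using $|z-\xi|\le R_z$ for $\xi\in\partial T$ and the monotonicity of $E$, and absorbs everything into $+0.8\,R_z^2/\tau$; this is where the hypotheses $\tau>\tfrac\e2 d^2$ and $\tau>\tfrac\e2 R_z^2$ are used, keeping the Bessel arguments below $2/\sqrt\e$ and ensuring $\ln(\tau/\tau_0)>1$. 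The real obstacle is quantitative: the target error is of order $1/\tau$, whereas the crude estimate $E(x)\asymp\tfrac{x^2}{4}\ln(2/x)$ only yields order $(\ln\tau)/\tau$ once divided by the factor $\|\nu\|^{-1}\sim\tfrac12\ln\tau$ that is already present; extracting the clean power $1/\tau$ — and the explicit constant $0.8$ — forces one to use genuine inequalities for $K_0$ and $K_1$ at small argument, e.g. through $K_0(x)=\int_0^\infty \e^{-x\cosh t}\,dt$ and $(xK_1(x))'=-xK_0(x)$, rather than two-term asymptotics, and this estimation is the bulk of the work. The potential-theoretic ingredients used above — existence and support of $\nu$, Frostman's identity, the domination principle — are classical for continua and demand no regularity of $\partial T$.
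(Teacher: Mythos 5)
Your strategy is genuinely different from the paper's, but you leave the quantitative core undone and, more importantly, misdiagnose where the difficulty lies. The paper does not pass through a Yukawa single-layer representation $F_T=\int K_0(\kappa|z-\xi|)\,d\nu(\xi)$; instead it integrates the strong-Markov identity
\[
K_0(\sqrt{2\lambda}\,|z-w|)=\Evc{\e^{-\lambda\tau_T}K_0(\sqrt{2\lambda}\,|\zeta_T-w|)}{\Brown(0)=z},\qquad w\in\partial T,
\]
(eq.\ \eqref{eq:K0-K0}, which comes from $G_T\to 0$ at $\partial T$) against the \emph{classical} harmonic measure $\mu$ from infinity, and then uses the exact Frostman identities $\int\ln|z-w|\,d\mu(w)=\pi H_T(z)+\ln r_T$ and $\int\ln|\zeta-w|\,d\mu(w)=\ln r_T$ for $\zeta\in\partial T$. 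In this way only one error term appears, coming from a single application of the two-sided $K_0$ bound. Your route instead compares the log-potential of the \emph{Yukawa} equilibrium measure $\mu=\nu/\|\nu\|$ to $\pi H_T+\ln r_T$; this is plausible (the domination-principle step and the symmetric Frostman argument both work for a continuum), but it accumulates three separate $O\bigl(E(\kappa\,\diam T)\bigr)$ errors --- the deviation of $h_\mu$ from $\pi H_T+\ln r_T$, the deviation of $1/\|\nu\|$ from $\tfrac12\ln(\tau/\tau_0)$, and the term $\int E\,d\nu$ --- and will not reproduce the paper's constant $0.8$ without more work.

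The more serious point is your final paragraph. The logarithmic ``loss'' you worry about does not in fact obstruct the clean $d^2/\tau$ rate, and no integral representation of $K_0$ is required. Using only the paper's $M=0$ bound $0\le E(x)\le 0.79\,\tfrac{x^2}{4}\bigl|\ln\tfrac{x^2}{4}\bigr|$ (valid for $x<2\e^{-\gamma}$), one gets an error proportional to $\tfrac{d^2}{\tau}\cdot\tfrac{\ln(2\tau/d^2)}{\ln(\tau/\tau_0)}$, and the ratio of logarithms is $\le 1$ precisely because the definitions force $d\ge\e^\gamma r_T$, i.e.\ $d^2\ge 2\tau_0$ (and likewise $R_z\ge\e^\gamma r_T$ for the upper bound). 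This cancellation is built into the choice of $d$, $R_z$ and $\tau_0$, and it is the reason the error is $O(d^2/\tau)$ rather than $O(d^2\ln\tau/\tau)$. So what remains in your sketch is not a deeper $K_0$ inequality but simply carrying the elementary bound through carefully enough to control the constant; as written the argument is a viable alternative outline but not a proof.
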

\begin{remark} As we show below, Hunt's asymptotic eq.\ \eqref{eq:Hunt} follows from these two estimates.  However, Theorem \ref{theorem:Hunt} goes beyond the result in \cite{Hunt1956} in that we extract explicit error bounds from the proof.
\end{remark}

Since $F_T(z,\tau) \le 1$ in any case, we see that eq.\ \eqref{eq:Huntupper} is trivial unless
$$ - \ \frac{2 \pi H_T(z)}{\ln \nicefrac{\tau}{\tau}_0  } \ + \ 0.8 \ \frac{R_z^2}{\tau} < 0,$$
i.e., unless
$$ 0.8 \frac{R_z^2}{2 \pi H_T(z)} < \frac{\tau}{\ln \nicefrac{\tau}{\tau_0}}.$$
For large $z$, $R_z^2 \sim |z|^2$ and $\pi H_T(z) \sim \ln \nicefrac{|z|}{r_T}$.  Thus this amounts to the condition
$$ 0.8 \frac{|z|^2}{2 \ln \nicefrac{|z|}{r_T}} \ < \ \frac{\tau}{\ln \nicefrac{\tau}{\tau_0}},$$
which is roughly $\tau > 0.4 |z|^2$.
On the other hand the estimates afforded by Theorem \ref{theorem:new} are valid for all $r$ once $\tau>\nicefrac{\e d^2}{2}$.  

Theorems \ref{theorem:new} and \ref{theorem:Hunt}  present  estimates  for the Abelian averages of hitting probabilities.  In the case of Theorem \ref{theorem:Hunt}, since $P_T(z,t)$ is an increasing function of $t$, it is straightforward to derive estimates that hold point-wise in $t$.  Indeed we have the following
\begin{corollary}\label{cor:Hunt}
For fixed $z$ in the exterior of $T$ we have
$$\abs{P_T(z,t) - \frac{2\pi H_T(z)}{\ln \nicefrac{t}{\tau_0}}} \ \le \ \frac{\ln \ln^2\nicefrac{t}{\tau_0}}{\ln^2\nicefrac{t}{\tau_0}} \left [ 2 \pi H_T(z) + o(1) \right ]$$
as $t \rightarrow \infty.$  In particular, the error in the approximation eq.\ \eqref{eq:Huntapprox} is essentially of order $ H_T(z) \frac{\ln \ln^2\nicefrac{t}{\tau_0}}{\ln^2\nicefrac{t}{\tau_0}}$.
\end{corollary}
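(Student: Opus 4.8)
The plan is a standard Tauberian (Abelian-inversion) argument built on the monotonicity of $t\mapsto P_T(z,t)$. Fix $z$ in the exterior of $T$ and set $h(t):=1-P_T(z,t)$, so that $h$ is non-increasing, $[0,1]$-valued, $h(0)=1$ and $h(t)\to0$; note $1-F_T(z,\tau)=\frac1\tau\int_0^\infty\e^{-s/\tau}h(s)\,\di s$. I read the corollary as the assertion that $\bigl|h(t)-\frac{2\pi H_T(z)}{\ln\nicefrac{t}{\tau_0}}\bigr|$ (equivalently, the error in the approximation \eqref{eq:Huntapprox}) obeys the stated bound; as literally written the left-hand side would tend to $1$, since $P_T(z,t)\to1$. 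Abbreviate $\ell:=\ln\nicefrac{t}{\tau_0}$, which tends to $\infty$.

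First I would record the elementary two-sided inequality relating $h(t)$ to its Abelian mean. Because $h$ is non-increasing, for all $t,\tau>0$,
\[
 h(t)\,\bigl(1-\e^{-t/\tau}\bigr)\ \le\ 1-F_T(z,\tau)\ \le\ \bigl(1-\e^{-t/\tau}\bigr)+h(t)\,\e^{-t/\tau},
\]
the left inequality by discarding $\int_t^\infty\ge0$ and using $h(s)\ge h(t)$ on $[0,t]$, the right by $h\le1$ on $[0,t]$ together with $h(s)\le h(t)$ on $[t,\infty)$. Solving for $h(t)$,
\[
 \e^{t/\tau}\bigl(1-F_T(z,\tau)\bigr)-\bigl(\e^{t/\tau}-1\bigr)\ \le\ h(t)\ \le\ \frac{1-F_T(z,\tau)}{1-\e^{-t/\tau}}.
\]

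Now I would feed in the two halves of Theorem \ref{theorem:Hunt} with a $\tau$ chosen, differently in each case, as a function of $t$. For the \emph{upper} bound on $h(t)$, take $\tau=\nicefrac{t}{\ell}$ — admissible in Theorem \ref{theorem:Hunt} once $\nicefrac{t}{\ell}>\nicefrac{\e}{2}d^2$, i.e.\ for $t$ large — so that $\e^{-t/\tau}=\e^{-\ell}$ and hence $\bigl(1-\e^{-t/\tau}\bigr)^{-1}=1+O(\e^{-\ell})$; using $1-F_T(z,\tau)\le\frac{2\pi H_T(z)}{\ln\nicefrac{\tau}{\tau_0}}+0.8\,\nicefrac{d^2}{\tau}$ from \eqref{eq:Huntlower}, noting $\ln\nicefrac{\tau}{\tau_0}=\ell-\ln\ell$, and expanding $\tfrac1{\ell-\ln\ell}=\tfrac1\ell+\tfrac{\ln\ell}{\ell^2}+O\!\bigl(\tfrac{\ln^2\ell}{\ell^3}\bigr)$ gives
\[
 h(t)\ \le\ \frac{2\pi H_T(z)}{\ln\nicefrac{t}{\tau_0}}\ +\ \frac{2\pi H_T(z)\,\ln\ell}{\ell^{2}}\,\bigl(1+o(1)\bigr).
\]
For the \emph{lower} bound on $h(t)$, take instead $\tau=\ell^{2}t$ — admissible once $\ell^2 t>\nicefrac{\e}{2}R_z^2$ — so that $\e^{t/\tau}-1=\ell^{-2}+O(\ell^{-4})$ and the ``$\e^{t/\tau}-1$'' loss is only $O(\ell^{-2})$; using $1-F_T(z,\tau)\ge\frac{2\pi H_T(z)}{\ln\nicefrac{\tau}{\tau_0}}-0.8\,\nicefrac{R_z^2}{\tau}$ from \eqref{eq:Huntupper}, noting $\ln\nicefrac{\tau}{\tau_0}=\ell+2\ln\ell$, and expanding $\tfrac1{\ell+2\ln\ell}=\tfrac1\ell-\tfrac{2\ln\ell}{\ell^2}+O\!\bigl(\tfrac{\ln^2\ell}{\ell^3}\bigr)$ gives
\[
 h(t)\ \ge\ \frac{2\pi H_T(z)}{\ln\nicefrac{t}{\tau_0}}\ -\ \frac{4\pi H_T(z)\,\ln\ell}{\ell^{2}}\,\bigl(1+o(1)\bigr).
\]
In both lines every remaining contribution — the terms $0.8\,\nicefrac{d^2}{\tau}$ and $0.8\,\nicefrac{R_z^2}{\tau}$, the factor $1+O(\e^{-\ell})$, the $O(\ell^{-2})$ loss, and the $O(\nicefrac{\ln^2\ell}{\ell^3})$ tails — is $o\!\bigl(\nicefrac{\ln\ell}{\ell^2}\bigr)$, because $t=\tau_0\e^{\ell}$ grows exponentially in $\ell$ while $d$, $R_z$ and $z$ are fixed. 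Since $\ln\ln^2\nicefrac{t}{\tau_0}=2\ln\ell$ and $\ln^2\nicefrac{t}{\tau_0}=\ell^2$, so that $\tfrac{4\pi H_T(z)\ln\ell}{\ell^2}=\tfrac{\ln\ln^2\nicefrac{t}{\tau_0}}{\ln^2\nicefrac{t}{\tau_0}}\,2\pi H_T(z)$, the two displays combine to
\[
 \Bigl|\,h(t)-\frac{2\pi H_T(z)}{\ln\nicefrac{t}{\tau_0}}\,\Bigr|\ \le\ \frac{\ln\ln^2\nicefrac{t}{\tau_0}}{\ln^2\nicefrac{t}{\tau_0}}\,\bigl[\,2\pi H_T(z)+o(1)\,\bigr],
\]
which is the claimed estimate. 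The ``in particular'' then follows because $\ln t$ and $\ln\nicefrac{t}{\tau_0}$ differ by the additive constant $\ln\tau_0$, so $\frac{2\pi H_T(z)}{\ln\nicefrac{t}{\tau_0}}$ and $1-\frac{2\pi H_T(z)}{\ln t}$ agree up to $O(\ell^{-2})=o(\nicefrac{\ln\ell}{\ell^2})$, which is absorbed into the error term.

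The algebra — the two Taylor expansions and the check that each leftover term is $o(\nicefrac{\ln\ell}{\ell^2})$ — is routine; the one step needing thought is the \emph{asymmetry} between the two choices of $\tau$. For the upper bound we may take $\tau\ll t$, so the mass the Laplace kernel $\frac1\tau\e^{-s/\tau}$ places near $s=0$ — where $h$ is controlled only by the trivial bound $h\le1$ — is exponentially small and costs nothing. For the lower bound we are forced the other way, $\tau\gg t$; to make that same near-origin mass $o(\nicefrac{\ln\ell}{\ell^2})$ we must push $\tau$ up to $\sim\ell^2 t$, which forces $\ln\nicefrac{\tau}{\tau_0}\sim\ell+2\ln\ell$ and hence the constant $2\pi H_T(z)$ on the lower side. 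It is this constant that dominates (the upper side contributing only $\pi H_T(z)$) and therefore fixes the coefficient in the statement.
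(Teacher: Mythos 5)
Your proposal is correct and follows essentially the same route as the paper: both exploit the monotonicity of $P_T(z,\cdot)$ to sandwich $h(t)=1-P_T(z,t)$ between expressions involving $1-F_T(z,\tau)$, then invoke Theorem \ref{theorem:Hunt} with an asymmetric choice of $\tau$ (roughly $\tau\sim t\ln^2(t/\tau_0)$ for the lower bound on $h$, and $\tau\ll t$ for the upper bound), and the paper's own displays confirm your reading that the statement should read $\bigl|1-P_T(z,t)-\frac{2\pi H_T(z)}{\ln\nicefrac{t}{\tau_0}}\bigr|$. The only cosmetic differences are that you fix $\tau=\ell^2 t$ outright while the paper optimizes over $s$ in $\tau=st$ and arrives at the same order, and you take $\tau=t/\ell$ where the paper takes $\tau=t/\ell^2$ (both make the near-origin loss exponentially small, so either works).
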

On the other hand, because 
$$ \Delta_T(r,t) \ := \ \abs{p_T(r,t) - p_{r_T}^D(r,t)},$$
is not obviously monotone in $t$, it is not clear how to transform the estimate eq.\ \eqref{eq:newestimate} provided by Theorem \ref{theorem:new} into a point-wise bound for $\Delta_T(r,t)$.  Nonetheless, we demonstrate by numerical computation that, for $T$ a line segment, $\Delta_T(r,t)$ appears to converge \emph{very rapidly} to zero, motivating the following
\begin{conjecture}
	For a given compact planar region $T$ there is a decreasing function $E_T(t)$ of time $t$ that goes to zero rapidly as $t \rightarrow \infty$ such that
	\begin{equation}\label{eq:conj}
		\abs{p_T(r,t) - p_{r_T}^D(r,t)} \ \le \ E_T(t) p_{r_T}^D(r,t)
	\end{equation}
	and
	\begin{equation}\label{eq:conj2}
	\abs{p_T(r,t) - p_{r_T}^D(r,t)} \ \le \ E_T(t) (1-p_{r_T}^D(r,t))
	\end{equation}
	for all $r \ge r_0=\max_{w\in T} |w|$, where $r_T$ is the conformal radius of $T$.\label{conj:conj}
\end{conjecture}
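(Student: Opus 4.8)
The conjecture is the time-domain counterpart of Theorem~\ref{theorem:new}, and a proof must upgrade the relative bound \eqref{eq:newestimate} on the Abelian means into a relative bound on $\Delta_T(r,t)$ itself; the difficulty noted in the paper is that $\Delta_T(r,\cdot)$ is not known to be monotone, so the Tauberian shortcut used for Corollary~\ref{cor:Hunt} is unavailable. My plan is a quantitative ``de-Laplacing'' of Theorem~\ref{theorem:new} carried out in the complex $\tau$-plane. By \eqref{eq:diskf}, $f_{r_T}^D(r,\tau)$ is an explicit ratio of order-zero modified Bessel functions and so extends analytically to $\tau\in\bbC\setminus(-\infty,0]$, and the Abelian mean $f_T(r,\tau)$ of the bounded function $p_T(r,\cdot)$ extends there as well. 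The first step is to revisit the proof of Theorem~\ref{theorem:new} and check that the estimate $\abs{f_T-f_{r_T}^D}\le 2.9\,\nicefrac{d^2}{\tau}\,f_{r_T}^D$ persists, in a sector $\abs{\arg\tau}<\pi-\delta$, in the form $\abs{f_T(r,\tau)-f_{r_T}^D(r,\tau)}\le C(\delta)\,\nicefrac{d^2}{\abs\tau}\,\abs{f_{r_T}^D(r,\tau)}$; since that proof is a comparison of resolvent kernels, it ought to tolerate a complex spectral parameter.

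Granting such a continuation, I would write $p_T(r,t)$ and $p_{r_T}^D(r,t)$ as Bromwich integrals and deform the contour onto a Hankel path hugging the cut $(-\infty,0]$; their difference is then a single contour integral whose integrand carries the small factor $\nicefrac{d^2}{\abs\tau}$, and a steepest-descent estimate with saddle near $\tau\sim t$ should yield $\Delta_T(r,t)\le(\const)\,\nicefrac{d^2}{t}\,p_{r_T}^D(r,t)$ --- that is, \eqref{eq:conj} with $E_T(t)$ of order $\nicefrac{d^2}{t}$, already a decisive improvement over the $\nicefrac{\ln\ln^2 t}{\ln^2 t}$ rate of Corollary~\ref{cor:Hunt}. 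For \eqref{eq:conj2} one first repeats the comparison against the complementary Abelian mean $\tfrac1\tau\int_0^\infty\e^{-\nicefrac{t}{\tau}}(1-p_{r_T}^D(r,t))\,\di t=1-f_{r_T}^D(r,\tau)$, again an explicit $K_0$-expression, and inverts along the same Hankel contour. In both inversions the dependence on $r$ enters only through the explicit Bessel factors, so the resulting bounds come out uniform in $r\ge r_0$ --- which is precisely what the conjecture demands.

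A complementary, more conceptual route --- and the natural one for pinning down a specific trap such as a line segment --- uses the exterior conformal map $\Phi$ carrying the complement of $T$ onto the complement of the disk of radius $r_T$, normalized by $\Phi(z)-z\to 0$ at infinity. Under $\Phi$ a Brownian path in the complement of $T$ becomes a time-changed Brownian path in the complement of the disk that hits the disk exactly when the original hits $T$; hence $p_T(r,t)$ equals a disk-hitting probability evaluated at the shifted initial radius $\abs{\Phi(z)}=\abs z+O(\nicefrac{d^2}{\abs z})$ and at the random time $A=\int_0^{\sigma}\abs{\Phi'(B_s)}^2\,\di s$, $\sigma$ being the hitting time of $T$. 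Since $\abs{\Phi'}^2-1$ is $O(\nicefrac{d^2}{\abs z^2})$ far out, the radius shift contributes an $r$-uniform relative error of order $\nicefrac{d^2}{t}$, while the time change $A-\sigma$ is controlled by $\int(\abs{\Phi'(y)}^2-1)\,F_T(y,\tau)\,G_\tau(z,y)\,\di y$ with $G_\tau$ the $\nicefrac1\tau$-resolvent of Brownian motion killed on $T$; for a segment the Joukowski map makes every piece explicit. The subtle point is the boundary layer near $\partial T$, where $\abs{\Phi'}$ may blow up at a corner or slit tip while $G_\tau$ vanishes, and showing that these effects compensate uniformly in $r$ is essentially the work already internal to the proof of Theorem~\ref{theorem:new}, now to be redone in the time domain.

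The main obstacle is \eqref{eq:conj2} --- the bound relative to the \emph{small} quantity $1-p_{r_T}^D(r,t)$ --- \emph{uniformly in $r\ge r_0$}. The two inequalities are dual: \eqref{eq:conj} is binding when $r\gg r_T$, where $p_{r_T}^D(r,t)$ is tiny, and \eqref{eq:conj2} when $p_{r_T}^D(r,t)$ is close to $1$, so together they amount to controlling $\Delta_T(r,t)\big/\min(p_{r_T}^D(r,t),\,1-p_{r_T}^D(r,t))$ for every $r$ at once. The delicate regime is $r\to\infty$ with $t$ fixed, where $\Delta_T(r,t)$ and $p_{r_T}^D(r,t)$ both decay like $\e^{-\nicefrac{r^2}{2t}}$ up to algebraic prefactors (by the large-argument asymptotics of $K_0$ and the Gaussian bound for planar Brownian motion): one needs the leading exponential rate \emph{and} the polynomial prefactor of $p_T$ to agree with those of $p_{r_T}^D$. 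Both are governed by a near-straight excursion of length $\sim r$ that must reach the $O(d)$-neighborhood of $T$, which is why they should agree, but making this quantitative and $r$-uniform is the crux. (That $E_T$ be \emph{decreasing} is not a real constraint, since any admissible bound may be replaced by its decreasing majorant.) Finally, while order $\nicefrac{d^2}{t}$ is the natural target here, establishing the still-faster decay suggested by the numerics for a segment would seem to require extra cancellation tied to the symmetry of $T$, and goes beyond what the Abelian estimate \eqref{eq:newestimate} alone provides.
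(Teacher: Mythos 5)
The statement you are addressing is not proved in the paper at all: it is stated as Conjecture~\ref{conj:conj}, motivated only by the numerics of \S\ref{sec:linetraps}, and the paper explicitly notes that since $\Delta_T(r,t)$ is not obviously monotone in $t$ there is no known way to convert the Abelian bound of Theorem~\ref{theorem:new} into a pointwise one. So there is no ``paper proof'' to match; the question is whether your program actually closes the gap, and it does not --- it is a plausible research plan with the decisive steps left open (several of which you candidly flag yourself).

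Concretely: (i) the proof of Theorem~\ref{theorem:new} rests on real-variable ingredients --- positivity of the harmonic measure $\mu$, the sandwich bounds \eqref{eq:K0lowerboundM=0}--\eqref{eq:K0upperboundM=0}, monotonicity of $x\abs{\ln x}$, and $0\le F_T\le 1$ --- none of which survives a complex spectral parameter as stated, so the claimed sectorial bound $\abs{f_T-f_{r_T}^D}\le C(\delta)\,\nicefrac{d^2}{\abs{\tau}}\,\abs{f_{r_T}^D}$ is an unproved (and nontrivial) assertion rather than a routine check; worse, the Hankel contour must hug the cut, where the continuation of $f_{r_T}^D$ becomes the oscillatory $J_0,Y_0$ combination of eq.~\eqref{eq:integralformula} with zeros in the denominator's neighborhood, so a uniform relative bound cannot simply be pushed up to the contour you need. (ii) Even granting such a bound on a contour, estimating the Bromwich integral of the difference by $p_{r_T}^D(r,t)$ \emph{uniformly in $r$} is a non sequitur: in the regime $r\gg\sqrt t$ both $\Delta_T$ and $p_{r_T}^D$ are exponentially small, and one must match the exponential rate and the algebraic prefactor; you correctly identify this as ``the crux'' but supply no argument for it. (iii) For eq.~\eqref{eq:conj2} you would first need an Abelian estimate relative to $1-f_{r_T}^D$, which is a different statement from \eqref{eq:newestimate} and is not delivered by the machinery of Theorem~\ref{theorem:new} (the relevant lower-bound structure is that of Theorem~\ref{theorem:Hunt}, which is neither relative nor uniform in $r$); ``repeat the comparison'' is not a proof. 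The conformal-map/time-change route has the same status: conformal invariance gives the qualitative picture, but the boundary layer at the slit tip where $\abs{\Phi'}$ blows up, and the transfer of the random time change into an $r$-uniform relative error against both $p_{r_T}^D$ and $1-p_{r_T}^D$, are exactly the open content of the conjecture. In short, your proposal is a sensible attack plan, but each of its load-bearing steps is currently an unverified claim, so the conjecture remains unproved.
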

\begin{remark}  It may be that some sort of geometric regularity should be required for $T$.  Certainly, it ought to suffice for $T$ to be convex.	
\end{remark}

The remainder of this paper is organized as follows.   In \S\ref{sec:linetraps}, numerical analysis of the hitting probabilities for $T$ the line segment $[-1,1]\times\{0\}$ are presented in order to illustrate the phenomena discussed here and to motivate Conjecture \ref{conj:conj}.   In \S\ref{sec:CR} we briefly review the definition of the conformal radius, the Green's function of a planar domain with pole at infinity and some related complex analysis. The proofs of Theorems \ref{theorem:new}, \ref{theorem:Hunt} and Corollary \ref{cor:Hunt} are given in \S\ref{sec:proofs}. In two appendices,
\begin{enumerate}
\item[\S\ref{sec:hitting}.] The validity of the algorithm used in \S\ref{sec:linetraps} to simulate hitting times of $[-1,1]\times \{0\}$ is proved.
\item[\S\ref{sec:K0}.] Controlled asymptotics for the Bessel function $K_0$ required in \S\ref{sec:proofs} are stated and proved.
\end{enumerate}
 
\section{An example: line segment traps}\label{sec:linetraps}
Let us examine the approximations presented above if the trap consists of a single line segment.  By scaling and rotational symmetries, it suffices to consider the case $$T\ = \ [-1,1] \times \{0\}  \ = \setb{(x,0)}{|x| \le 1}.$$ 
No exact formula for $P_T(z,t)$ seems to be available, however in Proposition \ref{prop:conformalradius} below we show that the conformal radius of $T$ is $r_T=\nicefrac{1}{2}$.

Although there is not an exact formula for $p_T$, there is a very fast random algorithm for simulating the process of a Brownian particle released from an arbitrary point $(x,y)$ and stopped if it hits $T$ before an arbitrary cutoff time $t_{\mathrm{max}}$:
\begin{quotation}
\textsf{\begin{enumerate}
\item Let $(X,Y)=(x,y)$ and $S=0$.
\item While $(X,Y) \not \in T$ and $S \le t_{\mathrm{max}}$ repeat:
\begin{enumerate}
	\item  If $Y\neq 0$, let $$X \mapsto X +\frac{|Y|}{|g_1|} g_2 , \quad Y \mapsto 0, \quad S \mapsto S + \frac{Y^2}{g_1^2}, $$ with $g_1$ and $g_2$ independent standard normal random variables, independent from each other and from any variables used previously.
	\item If $Y = 0$, let $$X \mapsto \frac{X}{|X|}, \quad Y \mapsto \frac{|X| -1 }{|g_1|} g_2, \quad S \mapsto S + \frac{(|X|-1)^2}{g_1^2}$$ with $g_1$ and $g_2$  standard normal random variables, independent from each other and from any variables used previously.
\end{enumerate}
\item Return the final value of $(X,Y,S)$.
\end{enumerate}
}
\end{quotation}

The random value $S$ returned by the algorithm satisfies 
\begin{equation}\label{eq:algorithm} \Pr(S < t) \ = \ p_T(x,y,t)	
\end{equation}
for any $t\le  t_{\mathrm{max}}$ \tem \ i.e., $S$ has the same distribution as the minimum of $t_{\mathrm{max}}$ and $t_T(x,y)$, the first time that a Brownian motion started at $(x,y)$ hits $T$. 
Eq.\ \eqref{eq:algorithm} is a consequence of Theorem \ref{theorem:algorithm}, proved below in Appendix \ref{sec:hitting}.   The main points in the argument are:
\begin{enumerate}
\item a Brownian motion $(X(t),Y(t))$ started at a point $(x,y)$ with $y \neq 0$ must hit the line $Y=0$ before (or as) it hits $T$;
\item a Brownian motion $(X(t),Y(t))$ started at a point $(x,y)$ with $|x|>1$ must hit the line $X=\nicefrac{x}{|x|}$ before hitting $T$; and
\item the time for a planar Brownian motion to hit a given line has the same distribution as $\nicefrac{d^2}{g^2}$ where $d$ is the distance from the initial point to the given line and $g$ is a standard normal random variable.
\end{enumerate}

By counting the number of times $C(t)$ that the output $S < t$ in $N$ repetitions of this algorithm we can approximate
\begin{equation}\label{eq:pTapprox}p_T(x,y,t) \ = \ \Pr(S<t) \ \approx \ \ = \ \frac{C(t)}{N},	
\end{equation}
for $t \le t_{\mathrm{max}}$.
Furthermore, since each repetition constitutes an independent Bernoulli trial with success probability $p_T(x,y,t)$ we can estimate the error in this approximation by standard statistical methods. 

\begin{figure}
\includegraphics[width=\columnwidth]{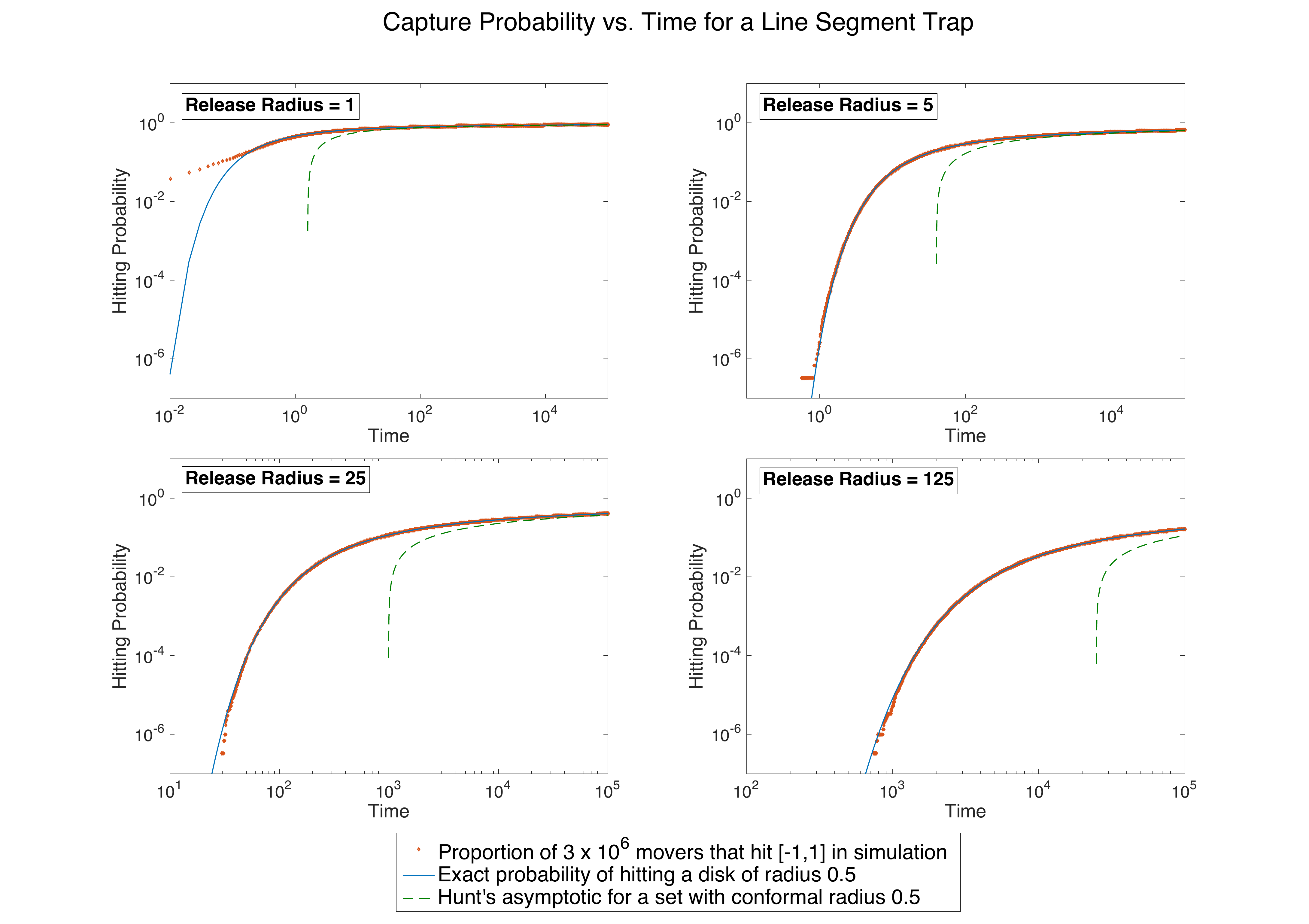}
\caption{\label{fig:1}Time evolution of proportion captured in simulations of hitting a line segment trap}
\end{figure}

The algorithm was implemented in Matlab and used to compute, for $r=1,$ $5$, $25$ and $125$, the time to hit $[-1,1]$ for $3\times 10^6$ movers initially distributed on a circle of radius $r$. Figure \ref{fig:1} shows, for each release radius $r$, the proportion $\mathrm{Prop}(t,r)$ of movers captured by the trap $T=[-1,1]$ plotted versus time, up to time $t=10^5$.  For comparison, each plot also shows the exact probability  $p^D_{0.5}(r,t)$ of hitting a disk of radius $r_T=\nicefrac{1}{2}$, computed from the formula \cite{Nicholson1921,Carslaw1940a,Wendel1980}:\footnote{Eq.\ \eqref{eq:integralformula} can be obtained from eq.\ \eqref{eq:diskf} by writing
$$P_{r_T}^D(r,t) \ = \ \frac{1}{2\pi } \int_{\Gamma} \e^{\zeta t} \frac{K_0(\sqrt{2 \zeta} r)}{\zeta K_0(\sqrt{2\zeta} r_T)} \di \zeta, $$
where $\Gamma$ is a contour of the form $\zeta = \lambda + \im y$, $-\infty < y < \infty$, and shifting the contour of integration.
}
\begin{equation}
p_{r_T}^D(r,t)=1+\frac{1}{\pi}\int_{0}^{\infty}\frac{1}{y} \frac{J_{0}\left(\nicefrac{yr}{r_T} \right) Y_0(y)-J_0(y)Y_0(\nicefrac{yr}{r_T})}{J_0(y)^2 + Y_0(y)^2}e^{-\frac{t y^{2}}{2 r_0}}dy,  \label{eq:integralformula}
\end{equation}
where $J_0$ and $Y_0$ are the order zero Bessel functions of the first and second kind.  Also shown is Hunt's asymptotic $1-\nicefrac{\ln \nicefrac{r}{r_T}}{\ln \nicefrac{\tau}{\tau_0}}$.  
One can clearly see that the approximation $\mathrm{Prop}(r,t)\approx p_{r_T}^D(r,t)$ is accurate over a much wider range of time scales than the asymptotic eq\ \eqref{eq:Huntapprox}.  In fact, for each release radius except for $r=1$, $p_{r_T}^D(r,t)$ is essentially indistinguishable from $\mathrm{Prop}(r,t)$ whenever the proportion is non-zero. For $r=1$ there is some deviation for very short times ($t <0.5$).  This is not surprising, since some individuals at this release radius start out extremely close to the trap and are captured very quickly.

\begin{figure}
\includegraphics[width=\columnwidth]{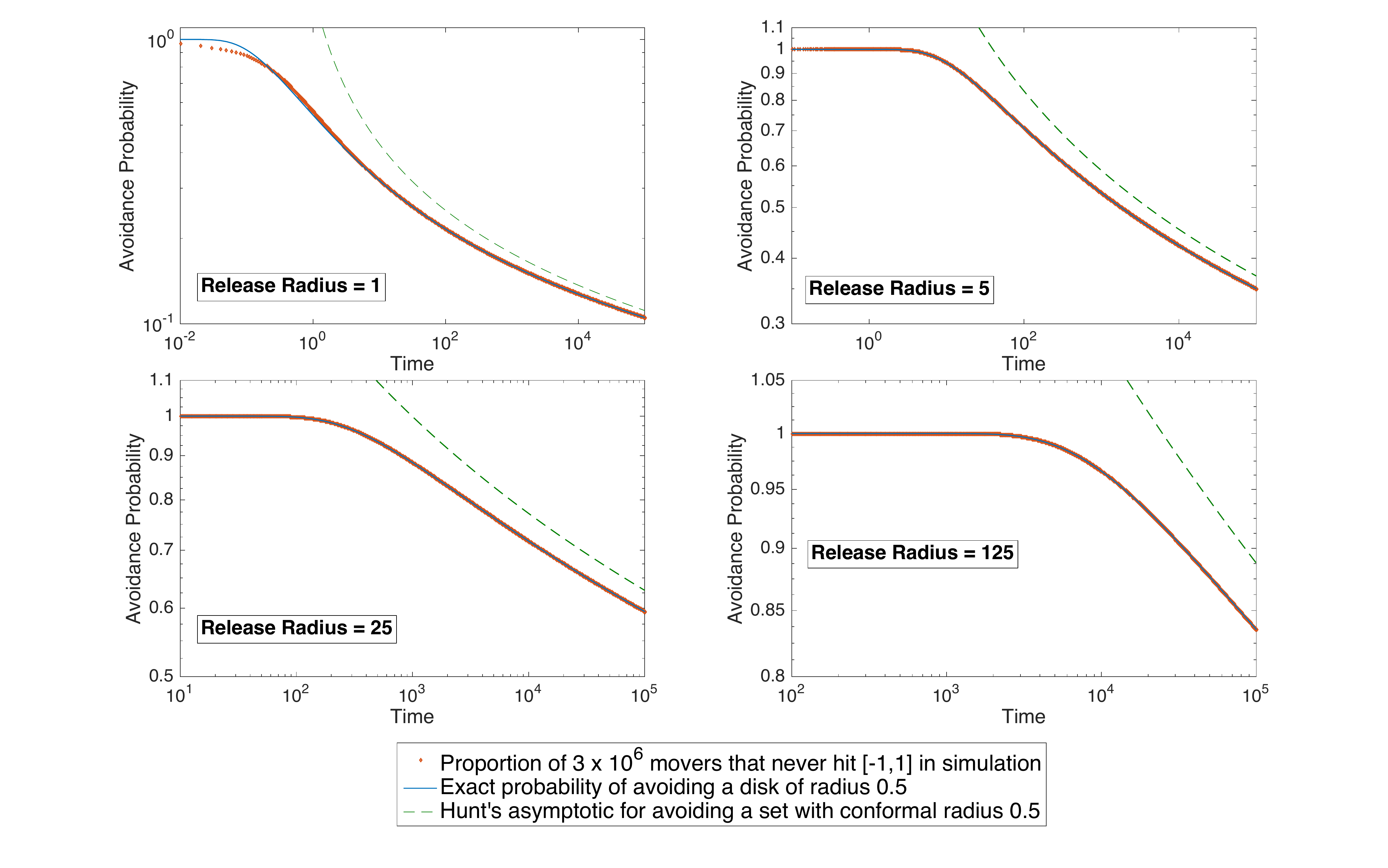}
\caption{\label{fig:2}Time evolution of proportion avoiding the trap in simulations of hitting a line segment trap}
\end{figure}
In Figure \ref{fig:2} the proportion avoiding the trap $1-\mathrm{Prop}(r,t)$ is shown for each release radius, with the corresponding exact probability  $1-p^D_{0.5}(r,t)$ of avoiding a disk of radius $\nicefrac{1}{2}$ and Hunt's asymptotic $\nicefrac{\ln \nicefrac{r}{r_T}}{\ln \nicefrac{\tau}{\tau_0}}$.  These are the same data as in Figure \ref{fig:1} plotted  so as to highlight the improvement over Hunt's asymptotic afforded by using $p_{0.5}^D(r,t)$ even at larger times when the hitting probability approaches one.

\section{The conformal radius of a planar region and the Green's function of its complement.}\label{sec:CR} In the remainder of the paper, it will be useful to identify the plane with the complex plane $$\mathbb{C} \ = \ \setb{z=x+\im y }{ x \text{ and } y \text{ are real numbers}},$$
where $\im $ is the imaginary unit ($\im ^2=-1$). If $T\subset \mathbb{C}$ is a connected proper subset that is neither empty nor a single point, then its unbounded complement in the Riemann sphere $\mathbb{C}\cup\{\infty\}$ is a proper simply connected set containing the point at infinity.  By the Riemann mapping theorem, there is a conformal map $\Phi_T:\mathbb{C} \setminus T \rightarrow \mathbb{C}\setminus \mathbb{D}$ fixing the point at infinity, where $\mathbb{D}$ is the unit disk.  Furthermore, this map is unique if we require it to have positive derivative at infinity.  We define the conformal radius $r_T$ to be the inverse of the corresponding derivative at infinity:\footnote{Note that $r_T$ is the unique radius of a disk $D$ such that there is a conformal map from $\mathbb{C}\setminus T$ to $\mathbb{C} \setminus D$ asymptotic to $z$ as $z\rightarrow \infty$.  Indeed $r_T \Phi_T(z)$ is such a map.}
$$r_T \ := \  \lim_{z\rightarrow \infty} \frac{z}{\Phi_T(z)}.$$

For example, we can compute the conformal radius of a line segment $T=[a,b]$:
\begin{proposition}\label{prop:conformalradius} The conformal radius of $[a,b]$, where $a < b$ are real numbers is $\nicefrac{b-a}{4}$.	
\end{proposition}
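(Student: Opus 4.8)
The plan is to reduce the general segment $[a,b]$ to the reference segment $[-2,2]$ by an affine change of variables, and then to compute the conformal radius of $[-2,2]$ explicitly using the inverse of the Joukowski map $w\mapsto w+\nicefrac1w$.

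First I would record how the conformal radius behaves under a similarity $z\mapsto\lambda z+c$ with $\lambda>0$ and $c\in\mathbb{C}$. If $T'=\lambda T+c$, then $z\mapsto\Phi_T\!\left(\nicefrac{(z-c)}{\lambda}\right)$ is a conformal bijection of $\mathbb{C}\setminus T'$ onto $\mathbb{C}\setminus\mathbb{D}$ fixing the point at infinity; since $\Phi_T(\zeta)\sim\nicefrac{\zeta}{r_T}$ as $\zeta\to\infty$, this map satisfies $z/\Phi_T(\nicefrac{(z-c)}{\lambda})\to\lambda r_T$, a positive limit, so by the uniqueness clause in the definition of $\Phi_{T'}$ it \emph{is} $\Phi_{T'}$ and hence $r_{T'}=\lambda r_T$. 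Applying this to $[a,b]=\tfrac{b-a}{4}[-2,2]+\tfrac{a+b}{2}$ reduces the proposition to the single claim $r_{[-2,2]}=1$.

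For the reference segment I would use the Joukowski map $J(w)=w+\nicefrac1w$. The facts to check are that $J$ restricts to a conformal bijection of $\set{|w|>1}$ onto $\mathbb{C}\setminus[-2,2]$: injectivity follows from $J(w_1)=J(w_2)\iff(w_1-w_2)\bigl(1-\nicefrac{1}{w_1w_2}\bigr)=0$ together with the impossibility of $w_1w_2=1$ when $|w_1|,|w_2|>1$; and surjectivity onto $\mathbb{C}\setminus[-2,2]$ follows by solving $w^2-zw+1=0$, whose two roots have product $1$ and hence include exactly one point of $\set{|w|>1}$ precisely when $z\notin[-2,2]$ (note $|w|=1$ forces $J(w)=2\Re w\in[-2,2]$). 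Letting $\Psi$ be the inverse branch with $\abs{\Psi(z)}>1$, we obtain a conformal bijection $\mathbb{C}\setminus[-2,2]\to\mathbb{C}\setminus\mathbb{D}$ fixing $\infty$, and writing $J(w)=w\bigl(1+\nicefrac{1}{w^2}\bigr)$ gives
$$\frac{z}{\Psi(z)}\ =\ \frac{J(\Psi(z))}{\Psi(z)}\ =\ 1+\frac{1}{\Psi(z)^2}\ \longrightarrow\ 1\qquad(z\to\infty),$$
since $\abs{\Psi(z)}\to\infty$; this positive limit shows $\Psi=\Phi_{[-2,2]}$ and $r_{[-2,2]}=1$.

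The only step needing genuine care \tem\ the main obstacle, albeit a mild one \tem\ is the verification that $J$ is a conformal bijection of the exterior of the unit disk onto $\mathbb{C}\setminus[-2,2]$ with the stated boundary behaviour, and the correct choice of inverse branch; the remaining steps are just bookkeeping with the definition of $r_T$.
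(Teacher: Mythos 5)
Your proof is correct and takes essentially the same approach as the paper: reduce to a reference segment by an affine scaling law for $r_T$, then exhibit the explicit conformal map via the Joukowski transform (the paper normalizes to $[-1,1]$ with $\Phi_T(z)=z+\sqrt{z^2-1}$ and inverse $\tfrac12(w+\nicefrac1w)$, which is just your $J$ rescaled). You supply more detail than the paper does \tem\ explicitly verifying the scaling behaviour of the conformal radius and the bijectivity of $J$ \tem\ but the mathematical content is identical.
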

\begin{proof} By scaling and translational symmetry it suffices to consider $T=[-1,1]$. For this set we have the explicit conformal map
$$\Phi_T(z) \ = \	z + \sqrt{z^2 -1},$$
where we choose the branch of $\sqrt{z^2 -1}$  that it is positive for $z > 1$ and negative for $z < -1$.  One may easily check that $\Phi_T$ maps $\mathbb{C}\setminus [-1,1]$ onto $\mathbb{C} \setminus \mathbb{D}$ and that
$$\Phi_T^{-1}(w) \ =\ \frac{1}{2} \left ( w + \frac{1}{w} \right )$$
is the corresponding inverse map. Since  $$\lim_{z\rightarrow \infty} \frac{\Phi_T(z)}{z} \ = \ 2,$$
the result follows.
\end{proof}

Recall that the Green's function on $\mathbb{C}\setminus T$ with pole at $\infty$ is the unique harmonic function $H_T$ on $\mathbb{C}\setminus T$ such that
$$\lim_{z\rightarrow T} H_T(z) = 0 \quad \text{and} \quad H_T(z) \sim \frac{1}{\pi}\ln |z| \text{  as } z \rightarrow \infty.$$
Thus the Green's function for the unit disk $\mathbb{D}$ is
$$H_{\mathbb{D}}(z) \ = \ \frac{1}{\pi} \ln |z|.$$
Since the composition of a harmonic function with a holomorphic map is again harmonic, it follows that the Green's function for $\mathbb{C} \setminus T$ is
$$H_T(z) \ = \ H_{\mathbb{D}}\circ \Phi_T(z) \ = \  \frac{1}{\pi} \ln \abs{\Phi_T(z)}.$$
Note that $H_T$ satisfies 
\begin{equation}\label{eq:HTrT}\lim_{z\rightarrow \infty} H_T(z) - \frac{1}{\pi} \ln |z| \ = \ -\frac{1}{\pi}\ln r_T.	
\end{equation}

\section{Proofs}\label{sec:proofs}

 The function $F_T(z,\tau)$ solves the following boundary value problem
$$ \begin{cases} \frac{2}{\tau} F_T(z,\tau) \ = \ \Delta F_T(z,\tau) \ , & z \in \mathbb{C}\setminus T\\
	F_T(z,\tau)=1 \ , & z \in T \\
	\lim_{z\rightarrow \infty} F_T(z,\tau) = 0
 \end{cases}
 $$
where $\Delta$ denotes the Laplacian.  When $T$ is the disk $\{|z| \le r_T \}$ centered at the origin, the corresponding solution is a function only of $r=|z|$ and satisfies
$$ \begin{cases} \frac{2}{\tau} f_{r_T}^D(r,\tau) \ = \ \left (\partial_r^2 + \frac{1}{r}\partial_r \right ) f_{r_T}^D(r,\tau) \ , & r > r_T\\
	f_{r_T}^D(r,\tau)=1 \ , & r \le r_T \\
	\lim_{r\rightarrow \infty} f_{r_T}^D(r,\tau) = 0
 \end{cases} .$$
 After multiplying through by $r^2$, the resulting equation becomes the modified Bessel equation \cite[\S10.25]{DLMF}, with the unique solution for the given boundary data being
$$ f_{r_T}^D(r,\nicefrac{1}{\lambda}) \ = \ \frac{K_0(\sqrt{2\lambda} r)}{K_0(\sqrt{2\lambda} r_T)}, \quad r > r_T.$$
Here and below we let $\lambda = \nicefrac{1}{\tau}$.

We will derive estimates on $F_T$ by relating it to resolvents of the Laplacian in $\mathbb{C}\setminus T$. Let  $p_T(z,w,t)$ denote the probability density  at time $t$ for Brownian motion started at $z$ and terminated upon hitting $T$. Let 
$$p(z,w,t) \ = \ \frac{1}{2\pi t} \e^{-\nicefrac{|z-w|^2}{2 t}}$$
 denote the corresponding Gaussian distribution for Brownian motion in the full plane.  Note that $P_T(z,\cdot,t)$ is a sub-probability density, with total mass equal to the survival probability:
$$ \int_{\mathbb{C}\setminus T} p_T(z,w,t) \di^2 w \ = \ 1 - F_T(z,t),$$
where $\di^2 w$ denotes area measure. The corresponding resolvents for $\mathbb{C}\setminus T$ and for $\mathbb{C}$ are
$$G_T(z,w,\lambda) \ = \ \int_0^\infty \e^{-\lambda t}p_T(z,w,t) \di t \qquad \text{and} \qquad G(z,w,\lambda) \ = \ \int_0^\infty \e^{-\lambda t}p(z,w,t) \di t, $$ respectively.
Note that
$$G(z,w,\lambda) \ = \ \frac{1}{\pi} K_0(\sqrt{2\lambda}|z-w|),$$
by \cite[Eq.\ 10.32.10]{DLMF}.

Let $\Evc{\cdot}{\Brown(0)=z}$ denote expectation with respect to Brownian motion starting at $z$.  For a given Brownian path  $\Brown$, let
$$\tau_T(\Brown) = \inf \setb{t}{\Brown(t) \in T}$$
denote the first time $\Brown$ hits $T$
and let $$\zeta_T(\Brown) \ = \ \Brown(\tau_T(\Brown))$$ 
denote the corresponding point in $\partial T$ hit by $\Brown$.  Note that 
$$P_T(z,t) \ = \ \Evc{\1[\tau_T(\Brown) <t]}{\Brown(0)=z}.$$
It follows that
\begin{equation}\label{eq:Fformula}F_T(z,\nicefrac{1}{\lambda}) \ = \ \Evc{\e^{-\lambda \tau_T(\Brown)}}{\Brown(0)=z}.	
\end{equation}

Given a set $A$ of positive measure in $\mathbb{C}\setminus T$ at a positive distance from $z$, we have 
\begin{multline*}\int_A  G_T(z,w,\lambda) \di^2 w \ = \ \int_0^\infty \e^{-\lambda t} \int_A  p_T(z,w,t) \di^2 w \di t  \\ = \ \Evc{\int_0^{\tau_T(\Brown)} \e^{-\lambda t}\1_A(\Brown(t))\di t}{\Brown(0)=z}	
\end{multline*}
and thus
\begin{multline*}\int_A G_T(z,w,\lambda) \di^2 w \\ = \ \Evc{\int_0^{\infty } \e^{-\lambda t}\1_A(\Brown(t))\di t}{\Brown(0)=z} - \Evc{\int_{\tau_T(\Brown)}^\infty \e^{-\lambda t}\1_A(\Brown(t))\di t}{\Brown(0)=z} \\
= \ \int_A \left [ G(z,w,\lambda) \ - \ \Evc{\e^{-\lambda \tau_T(\Brown)} G(\zeta_T(\Brown),w,\lambda)}{\Brown(0)=z} \right ] \di^2 w	,
\end{multline*}
 using the strong Markov property for Brownian motion. Taking $A$ to be a disk, dividing by the measure $|A|$ of $A$ and taking the limit as $|A|\rightarrow 0$, we obtain
 \begin{equation}\label{eq:GTrep}G_T(z,w,\lambda) \ = \ \frac{1}{\pi} \left [ K_0(\sqrt{2\lambda}|z-w|) - \Evc{\e^{-\lambda\tau_T(\Brown)} K_0(\sqrt{2\lambda}|\zeta_T(\Brown)-w|)}{\Brown(0)=z} \right ].\end{equation}
Since $G_T(z,w,\lambda)$ vanishes as $w \ra \partial T$, we obtain
\begin{equation}\label{eq:K0-K0}    K_0(\sqrt{2\lambda}|z-w|)  \ = \ \Evc{\e^{-\lambda\tau_T(\Brown)}   K_0(\sqrt{2\lambda}|\zeta_T(\Brown)-w|)}{\Brown(0)=z}, \quad w\in \partial T .	
\end{equation}
This identity is the starting point for the proofs of both Theorem \ref{theorem:new} and Theorem \ref{theorem:Hunt}.

\subsection{Relating the Green's function $H_T$ to Brownian motion.}
The limit of $G_T$ as $\lambda \rightarrow 0$,
$$\lim_{\lambda \rightarrow 0} G_T(z,w,\lambda) \ := \ G_T(z,w,0) \ = \ \int_0^\infty p_T(z,w,t) \di t$$
is called the \emph{Green's function for $\mathbb{C}\setminus T$ with pole at $w$.} The following formula of Hunt \cite{Hunt1956} relates $G_T(\cdot,\cdot,\lambda)$ to the Green's function with pole at infinity:\footnote{To prove eq.\ \eqref{eq:Huntsid}, one verifies that the function on the right hand side is a harmonic function of $z$, vanishes for $z\in \partial T$ and is asymptotic to $\ln |z|$ as $z \rightarrow \infty$. It follows that the right hand side is equal to $H_T$ \tem \ and thus is in fact independent of $w$!}
 \begin{equation}\label{eq:Huntsid}H_T(z) \ = \ G_T(z,w,0) + \frac{1}{\pi} \ln |z-w| - \frac{1}{\pi}\Evc{\ln |\zeta_T(\Brown)-w|}{\Brown(0)=z}.	
\end{equation}
Taking $w \rightarrow \partial T$ in eq.\ \eqref{eq:Huntsid} we conclude that
\begin{equation}H_T(z) \ = \ \frac{1}{\pi} \ln |z-w| - \frac{1}{\pi} \int_{\partial T} \ln \abs{\zeta - w} \di \mu_z(\zeta), \qquad w\in \partial T,\label{eq:Huntsid2}	
\end{equation}
where $\mu_z$ denotes the distribution of $\zeta_T(\Brown)$ for $\Brown(0)=z$.  As $z\rightarrow \infty$, the measures $\mu_z$ converge vaguely to a probability measure $\mu=\mu_\infty$ on $\partial T$, which is the distribution of the ``first point of $T$ hit by a Brownian motion starting at $\infty$,'' a notion that can be made precise by considering Brownian motion on the Riemann sphere. Taking $z \rightarrow \infty$ and using eq.\ \eqref{eq:HTrT} we conclude that \begin{equation}\label{eq:measure} \int_{\partial T} \ln |\zeta -w| \di \mu(\zeta) \ = \ \ln r_T ,
\end{equation}
for any $w \in \partial T$.  Furthermore, we obtain another formula for $H_T(z)$ namely
\begin{equation}\label{eq:HTrep2} H_T(z) \ = \ \frac{1}{\pi} \int_{\partial T} \ln |z-\zeta| \di \mu(\zeta) - \frac{1}{\pi}\ln r_T.	
\end{equation}
To see that this formula holds, integrate eq.\ \eqref{eq:Huntsid2} with respect to $\di \mu(w)$ and apply eq.\ \eqref{eq:measure}. (Alternatively, note that the function on the right hand side is harmonic in $z$, vanishes on $\partial T$ and has the correct asymptotic as $z \rightarrow \infty$, so it is indeed $H_T$.)

\subsection{Proof of Theorem \ref{theorem:new}}
By eq.\ \eqref{eq:K0-K0} we have 
\begin{multline}\label{eq:startingpoint}\frac{1}{2\pi r} \int_{|z|=r} \int_{\partial T}  K_0(\sqrt{2\lambda} |z-w|) \di \mu(w) |\di z| \\ = \  \frac{1}{2\pi r} \int_{|z|=r} \int_{\partial T}  \Evc{\e^{-\lambda \tau_T(\Brown)} K_0(\sqrt{2\lambda}|\zeta_T(\Brown)-w|)}{\Brown(0)=z}  \di \mu(w) |\di z|.\end{multline}

Let us first look at the left hand side of eq.\ \eqref{eq:startingpoint}. For fixed $w\in \mathbb{C} \setminus \{0\}$, $$g(r,w;\lambda) \ := \ \frac{1}{2\pi r} \int_{|z|=r} K_0(\sqrt{2\lambda} |z-w|)|\di z|.$$
solves
$$2\lambda g(r,w;\lambda) \ = \ \partial_r^2 g(r,w;\lambda) + \frac{1}{r}\partial_r g(r,w;\lambda),\quad r\neq |w|.$$
Furthermore $g$ is continuous as a function of $r$ and converges to $K_0(\sqrt{2\lambda}|w|)$ as $r\ra 0$. It follows that
$$g(r,w;\lambda) \ = \ \begin{cases}
 K_0(\sqrt{2\lambda} |w|) I_0(	\sqrt{2\lambda} r) & r < |w| \ , \\
 K_0(\sqrt{2\lambda} r ) I_0(\sqrt{2\lambda} |w|) & r > |w| \ ,
 \end{cases}$$
 where $I_0$ denotes the order zero modified Bessel function of the first kind \tem \ see \cite[\S 10.25]{DLMF}.
Thus
\begin{equation}\label{eq:LHS}\int_{\partial T}\frac{1}{2\pi r} \int_{|z|=r} K_0(\sqrt{2\lambda} |z-w|)|\di z| \di \mu(w) \ = \ \left [ \int_{\partial T} I_0(\sqrt{2\lambda} |w|) \di \mu(w) \right ] K_0(\sqrt{2\lambda} r)	
\end{equation}
provided $r > r_0 := \max_{w\in \partial T} |w|$.

We now turn to the right hand side of eq.\ \eqref{eq:startingpoint}.  Using the upper bound \eqref{eq:K0upperboundM=0} for $K_0$ proved below and eq.\ \eqref{eq:Fformula}, we have
\begin{multline*}
	\int_{\partial T} \Evc{\e^{-\lambda \tau_T(\Brown)} K_0(\sqrt{2\lambda}|\zeta_T(\Brown)-w|)}{\Brown(0)=z} \di \mu(w) \\ 
\le  \ \Bigg ( - \ln \sqrt{\frac{\lambda}{2}} - \gamma - \ln r_T  \  + \ 0.79 \ \sup_{\zeta\in \partial T} \int_{\partial T} \frac{\lambda |w-\zeta|^2 }{2} \abs{\ln \frac{\lambda |w-\zeta|^2 }{2}} \di \mu(w)\Bigg )  F_T(z,\nicefrac{1}{\lambda}),
\end{multline*} 
provided $\sqrt{2\lambda} \diam(T)  < 2 \e^{-\gamma}$.  Because $x\mapsto x |\ln x|$ is monotone increasing on $[0,\nicefrac{1}{e}]$, we find that if $\lambda  \le \nicefrac{2}{\e d^2}$ with $d\ge \diam(T)$, then 
\begin{multline*} \int_{\partial T} \Evc{\e^{-\lambda \tau_T(\Brown)} K_0(\sqrt{2\lambda}|\zeta_T(\Brown)-w|)}{\Brown(0)=z} \di \mu(w)
\\ \le \ \left ( - \ln \sqrt{\frac{\lambda}{2}} - \gamma - \ln r_T + 0.79 \ \frac{\lambda d^2}{2} \abs{\ln \frac{\lambda d^2}{2} }\right ) F_T(z,\nicefrac{1}{\lambda}) \\  \le \ \left ( K_0(\sqrt{2\lambda} r_T) + 0.79 \ \frac{\lambda d^2}{2} \abs{\ln \frac{\lambda d^2}{2} }\right ) F_T(z,\nicefrac{1}{\lambda}),
\end{multline*}
where,  in the last step, we have used the lower bound \eqref{eq:K0lowerboundM=0} for $K_0$.
Similarly, using (\ref{eq:K0lowerboundM=0}, \ref{eq:K0upperboundM=0}) in the reverse order we obtain the following lower bound
 \begin{multline*}
	\int_{\partial T} \Evc{\e^{-\lambda \tau_T(\Brown)} K_0(\sqrt{2\lambda}|\zeta_T(\Brown)-w|)}{\Brown(0)=z} \di \mu(w) \\ \ge  \ \left ( -\ln \sqrt{\frac{\lambda}{2}} - \gamma - \ln r_T \right ) F_T(z,\nicefrac{1}{\lambda})  \\ \ge \ \left ( K_0(\sqrt{2\lambda} r_T) - 0.79 \ \frac{\lambda d^2}{2} \abs{\ln \frac{\lambda d^2}{2} }\right ) F_T(z,\nicefrac{1}{\lambda}),
\end{multline*}
provided $\e^{\gamma} r_T \le d$ and $\lambda < \nicefrac{2}{ed^2}$.

We take $d = \max(\e^{\gamma} r_T, \diam T)$ and $\lambda < \nicefrac{2}{ed^2}$ so that both the upper and lower bounds hold.  Then 
 \begin{multline}\label{eq:RHS}
	\frac{1}{2\pi r} \int_{|z|=r} \int_{\partial T} \Evc{\e^{-\lambda \tau_T(\Brown)} K_0(\sqrt{2\lambda}|\zeta_T(\Brown)-w|)}{\Brown(0)=z} \di \mu(w)|\di z| \\
	= \ (1 + A_T(r)) K_0(\sqrt{2\lambda} r_T)\frac{1}{2\pi r} \int_{|z|=r} F_T(z,\nicefrac{1}{\lambda})|\di z|
	\end{multline}
	with
\begin{equation}\label{eq:ATbound}\abs{A_T(r)} \ \le \ 0.79 \ \frac{\lambda d^2 \abs{\ln \frac{\lambda d^2}{2} }}{2 K_0(\sqrt{2\lambda} r_T)} \ \le \ 0.79 \ \lambda d^2 \frac{\abs{\ln \frac{\lambda d^2}{2}}}{\abs{\ln \frac{\lambda r_T^2}{2} + 2 \gamma} } \ \le \ 0.79 \ \lambda d^2 , 	
\end{equation}
where we have used eq.\ \eqref{eq:K0lowerboundM=0} and the fact that   $1 \ge \nicefrac{\lambda d^2}{2} \ \ge  \ \nicefrac{\lambda r_T^2 \e^{2\gamma}}{2} $.

Putting together eqs.\ (\ref{eq:startingpoint}, \ref{eq:LHS}, \ref{eq:RHS}), we find the following formula for $f_T = \frac{1}{2\pi r}\int_{|z|=r}F_T$:
$$f_T(r,\nicefrac{1}{\lambda}) \ = \
\left [ \frac{ \int_{\partial T} I_0(\sqrt{2\lambda} w) \di \mu(w)}{1 + A_T(r)} \right ] 	
\frac{K_0(\sqrt{2\lambda} r)}{K_0(\sqrt{2\lambda} r_T)} .$$
Since $I_0(x)\ge 0$ for $x\ge 0$, $I_0(0)=1$ and $I_0'(0)=0$, we have 
$$ 0 \ \le \ \int_{\partial T} I_0(\sqrt{2\lambda} |w|) \di \mu(w) - 1 \ \le \ \frac{\lambda d^2 }{2}  \sup_{0\le x\le \sqrt{2\lambda} d} I_0''(x)  \\ \le \  \frac{\lambda d^2}{2}  \frac{I_0(\sqrt{2\lambda} d) + I_2(\sqrt{2\lambda} d)}{2},
$$
where $I_2$ denotes the order two modified Bessel function of the first kind, and we have used the identity \cite[\S 10.29]{DLMF}
$$2 I_0''(x) =  I_0(x) + I_2(x) $$
as well as the fact that $I_0$ and $I_2$ are increasing.
Since $\sqrt{2\lambda} d < \nicefrac{2}{\sqrt{\e}}$, we have
$$ \int_{\partial T} I_0(\sqrt{2\lambda} |w|) \di \mu(w) - 1 \ \le \   \frac{I_0(\nicefrac{2}{\sqrt{\e}}) + I_2(\nicefrac{2}{\sqrt{\e}})}{4} \lambda^2 d\ = \ \left ( 0.4027\cdots  \right ) \lambda^2 d \ \le \ 0.41 \ \lambda^2 d . $$
With this inequality and \eqref{eq:ATbound} we conclude that
$$
\abs{	f_T(r,\tau) - f_{r_T}^D(r,\tau)} \ \le \  \frac{0.41 + 0.79}{1 -  0.79 \ \frac{d^2}{\tau}} \frac{d^2}{\tau} f_{r_T}^D(r,\tau) \ \le \  2.9 \ \frac{  d^2}{\tau} f_{r_T}^D(r,\tau),
$$
since  $\nicefrac{d^2}{\tau} \ \le \ \nicefrac{2}{\e}$. \qed

\subsection{Proof of Theorem \ref{theorem:Hunt}}
By eq.\ \eqref{eq:K0-K0}  we have
\begin{equation}\label{eq:startingpoint2}    0 \ = \ \int_{\partial T} \left [ K_0(\sqrt{2\lambda}|z-w|) -  \Evc{\e^{-\lambda\tau_T(\Brown)}   K_0(\sqrt{2\lambda}|\zeta_T(\Brown)-w|)}{\Brown(0)=z} \right ] \di \mu(w) .	
\end{equation}
Using eqs.\ (\ref{eq:K0lowerboundM=0}, \ref{eq:K0upperboundM=0}) below and eq.\ \eqref{eq:Fformula} we conclude
\begin{multline*}
	0 \ge - \ln \sqrt{\frac{\lambda}{2}} -\gamma  - \int_{\partial T} \ln |z-w| \di \mu(w) 
	 + \left (\ln \sqrt{\frac{\lambda}{2}} + \gamma \right ) F_T(z,\nicefrac{1}{\lambda}) \\ + \Evc{\e^{-\lambda \tau_T(\Brown)}  \int_{\partial T} \ln |\zeta_T(\Brown)-w| \di \mu(w)}{\Brown(0)=z} \\ - 0.79 \  \sup_{\zeta\in \partial T} \int_{\partial T} \frac{\lambda |w-\zeta|^2 }{2} \abs{\ln \frac{\lambda |w-\zeta|^2 }{2}} \di \mu(w).
\end{multline*}
Thus, by eqs. (\ref{eq:measure}, \ref{eq:HTrep2}),
$$
0 \ge - \pi H_T(z) - \left ( \ln \sqrt{\frac{\lambda}{2}} + \gamma + \ln r_T \right ) \left ( 1 - F_T(z,\nicefrac{1}{\lambda}) \right )   - 0.79 \   \frac{\lambda d^2 }{2} \abs{\ln \frac{\lambda d^2 }{2}} ,
$$
where as above the bound on the final term uses the fact $x \mapsto x \abs{\ln x}$ is increasing for $x < \e^{-1}$.  In this way we obtain
$$F_T(z,\tau) \ \ge \ 1 - \frac{2\pi H_T(z)}{\ln \nicefrac{\tau}{\tau_0}} - \ 0.79 \ \frac{d^2}{\tau} \frac{\ln \nicefrac{2 \tau}{d^2}}{\ln \nicefrac{\tau}{\tau_0}}, $$
with $\tau_0 \ = \ \frac{1}{2}\e^{2\gamma} r_T^2$.  If $\tau > \nicefrac{d^2}{2}$, then $1 \le  \nicefrac{2 \tau}{d^2} \ \le \ \nicefrac{\tau}{\tau_0}$.  Thus $\ln \nicefrac{2\tau}{d^2}  \le \ln \nicefrac{\tau}{\tau_0}$ and the claimed lower bound \eqref{eq:Huntlower} holds.

In the other direction, if $\lambda < \nicefrac{2}{R_z^2 \e }$, with
$$R_z = \max (\sup_{w\in \partial T} |z-w|, r_T\e^\gamma),$$
then
\begin{equation*}
\begin{split}
	0 \ \le & \ - \ln \sqrt{\frac{\lambda}{2}} -\gamma  - \int_{\partial T} \ln |z-w| \di \mu(w) 
	 + \left (\ln \sqrt{\frac{\lambda}{2}} + \gamma \right ) F_T(z,\nicefrac{1}{\lambda}) \\ & \  + \Evc{\e^{-\lambda \tau_T(\Brown)} \int_{\partial T} \ln |\zeta_T(\Brown)-w| \di \mu(w)}{\Brown(0)=z}  \ + 0.79 \ \int_{\partial T} \frac{\lambda |z-w|^2}{2} \abs{\ln  \frac{\lambda |z-w|^2}{2}} 
	  \\
	 \ \le & \ - \pi H_T(z) - \left ( \ln \sqrt{\frac{\lambda}{2}} + \gamma + \ln r_T \right ) \left ( 1 - F_T(z,\nicefrac{1}{\lambda} \right ) \ + \  0.79 \ \frac{\lambda R_z^2}{2} \abs{\ln \frac{\lambda R_z^2}{2}},
	 \end{split}
	 \end{equation*}
from which the upper bound \eqref{eq:Huntupper} follows in a similar way. \qedhere

\subsection{Proof of Corollary \ref{cor:Hunt}}
Since $P_T(z,t)$ is an increasing function of $t$ we have
\begin{multline*}
P_T(z,t) \ \le \ \frac{\e^{\nicefrac{t}{\tau}}}{\tau} \int_t^\infty \e^{-\nicefrac{s}{\tau}} P_T(z,s) \di s \ \le \ \frac{1}{\tau} \int_0^\infty 	\e^{-\nicefrac{s}{\tau}} P_T(z,s) \di s + \frac{\e^{\nicefrac{t}{\tau}}-1}{\tau} \int_t^\infty \e^{-\nicefrac{s}{\tau}} \di s \\
= \ F_T(z,\tau) + 1 - \e^{-\nicefrac{t}{\tau}} \ \le \ F_T(z,\tau) + \frac{t}{\tau}
\end{multline*}
for any $\tau >0$.  Taking $\tau = s t$, and assuming $s > \nicefrac{\e R_z^2}{2 t}$, we have
$$P_T(z,t) \ \le \ 1 - \frac{2\pi H_T(z)}{\ln \nicefrac{t}{\tau_0} + \ln \nicefrac{s}{\tau_0}} + \frac{1}{s} + 0.8 \ \frac{R_z^2}{s t} \ . $$
If $t$ and $s$ are both larger than $\tau_0$, this is further bounded by
$$P_T(z,t) \ \le \ 1 - \frac{2\pi H_T(z)}{\ln \nicefrac{t}{\tau_0}}  + 2 \pi H_T(z)\frac{\ln \nicefrac{s}{\tau_0}}{\ln^2 \nicefrac{t}{\tau_0}} + \frac{1}{s} + 0.8 \ \frac{R_z^2}{s t}   .$$
 Optimizing over the choice of $s$ leads to
$$s = \frac{1 + 0.8 \nicefrac{R_z^2}{t}}{2\pi H_T(z)} \ln^2 t/\tau_0,$$
and thus
$$P_T(z,t) \ \le \ 1 - \frac{2 \pi H_T(z)}{\ln \nicefrac{t}{\tau_0}} + \frac{\ln \ln^2\nicefrac{t}{\tau_0} }{\ln^2\nicefrac{t}{\tau_0}} \left [ 2 \pi H_T(z)  + o(1) \right ].$$

Estimating $P_T(z,t)$ from below we have
\begin{multline*} P_T(z,t) \ \ge \ (1-\e^{-\nicefrac{t}{\tau}} ) P_T(z,t) \ \ge \ \frac{1}{\tau}\int_0^t\e^{-\nicefrac{s}{\tau}} P(z,s) \di s \\ \ge \ F_T(z,\tau) - \frac{1}{\tau} \int_t^\infty  \e^{-\nicefrac{s}{\tau}} \di s \ = \ F_T(z,\tau) -\e^{-\nicefrac{t}{\tau}}	
\end{multline*}
for any $\tau >0$.  Taking $\tau = \nicefrac{t}{\ln^2 \nicefrac{t}{\tau_0}} $ we have
\begin{multline*}P_T(z,t) \ \ge \ 1 - \frac{2\pi H_T(z)}{\ln \nicefrac{t}{\tau_0} }\frac{1}{1- \frac{\ln \ln^2 \nicefrac{t}{\tau_0}}{\ln \nicefrac{t}{\tau_0}}}  - \e^{- \ln^2 \nicefrac{t}{\tau_0}} - 0.9 \  \frac{d^2 \ln^2 \nicefrac{t}{\tau_0} }{t} \\
\ge \ 1 - \frac{2\pi H_T(z)}{\ln \nicefrac{t}{\tau_0} } - \frac{\ln \ln^2\nicefrac{t}{\tau_0}}{\ln^2\nicefrac{t}{\tau_0}} \left [ 2\pi H_T(z)   + o(1) \right ] ,
\end{multline*}
completing the proof. \qedhere

\appendix
\section{Simulating hitting of a line segment trap}\label{sec:hitting}
The validity of the algorithm described in \S\ref{sec:linetraps} is a consequence of the following
\begin{theorem}\label{theorem:algorithm}
	Let $(x_0,y_0)$ be a point in the plane, et $t_0=0$ and let 
	$$ g_1, g_2, \ldots \quad \text{and} \quad g_1', g_2',\ldots $$
	be two mutually independent sequences of independent standard normal random variables.
	 Construct a sequence of triples $\left[ (x_n,y_n,t_n)\right ]_{n=0}^\infty$ recursively as follows.  For $n=1,\ldots$,
	\begin{enumerate}
		\item if $|x_{n-1}|\le 1$ and $y_{n-1} =0$, let $$ (x_n,y_n,t_n) \ = \ (x_{n-1},y_{n-1},t_{n-1});$$
		\item if $y_{n-1} \neq 0$, let 
			$$ x_n = x_{n-1} +\frac{|y_{n-1}|}{|g_n|} g_n', \quad y_n = 0, \quad \text{and} \quad t_n = t_{n-1} + \frac{y_{n-1}^2}{g_n^2};$$
		\item if $y_{n-1} = 0$ and $|x_{n-1}| > 1$, let
			$$ x_n = \frac{x_{n-1}}{|x_{n-1}|}, \quad y_n = y_{n-1} + \frac{|x_{n-1}|-1}{|g_n|} g_n', \quad \text{and} \quad t_n = t_{n-1} + \frac{(|x_{n-1}-1|)^2}{g_n^2}.$$
	\end{enumerate}
	Let 
	$$ n_0 \ = \ \min \setb{n}{|x_n|\le 1 \text{ and } y_n =0}.$$
	Then $n_0 < \infty$ with probability one and $(t_{n_0},x_{n_0},y_{n_0})$ has the same distribution as $(t_T(x_0,y_0),$ $\zeta_T(x_0,y_0))$, where $t_T(x_0,y_0)$ denotes the time at which a standard Brownian motion $B$ started at $(x_0,y_0)$ hits $T = [-1,1]\times \{0\}$ and  $$\zeta_T(x_0,y_0) \ = \ B(t_T(x_0,y_0))$$ denotes the corresponding point of $T$ hit at time $t_T(x_0,y_0)$. 
\end{theorem}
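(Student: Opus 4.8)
\emph{The plan is to realise the algorithm's recursion as a description, along an actual standard planar Brownian motion $B$ started at $(x_0,y_0)$, of the positions of $B$ at a sequence of stopping times $0=\sigma_0\le\sigma_1\le\sigma_2\le\cdots$, where $\sigma_n$ is the first time after $\sigma_{n-1}$ that $B$ reaches the ``barrier line'' separating $B(\sigma_{n-1})$ from $T$.} Explicitly: if $B(\sigma_{n-1})\in T$, set $\sigma_n=\sigma_{n-1}$; if $Y(\sigma_{n-1})\ne0$, let $\sigma_n$ be the first time after $\sigma_{n-1}$ that $Y=0$; and if $Y(\sigma_{n-1})=0$ with $|X(\sigma_{n-1})|>1$, let $\sigma_n$ be the first time after $\sigma_{n-1}$ that $X=\sgn\bigl(X(\sigma_{n-1})\bigr)$. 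The three facts listed after eq.~\eqref{eq:algorithm} are the inputs: \emph{(i)} from a point with $y\ne0$, $B$ must reach $\{Y=0\}$ before $T$, since $T\subset\{Y=0\}$; \emph{(ii)} from $(x,0)$ with $|x|>1$, $B$ must reach $\{X=\sgn x\}$ before $T$, by continuity of $X(\cdot)$ and $T\subset\{|X|\le1\}$; and \emph{(iii)} the time for planar Brownian motion at distance $d>0$ from a line $L$ to reach $L$ has the law of $d^2/g^2$ with $g$ standard normal, and, jointly, conditionally on that time equalling $s$ the displacement of $B$ parallel to $L$ up to time $s$ is a centred Gaussian of variance $s$; equivalently the pair is distributed as $\bigl(d^2/g_1^2,\ (d/|g_1|)g_2\bigr)$ with $g_1,g_2$ independent standard normals. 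Fact (iii) follows because the two coordinate processes of $B$ are independent, the hitting time is a function of the perpendicular one alone, and $d^2/g^2$ has density $\tfrac{d}{\sqrt{2\pi}}s^{-3/2}e^{-d^2/(2s)}$, which is exactly the first-passage density to a level at distance $d$ for one-dimensional Brownian motion.

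Granting (i)--(iii), two structural remarks carry the proof. First, by (i) and (ii) the path $B$ does not meet $T$ on the open interval $(\sigma_{n-1},\sigma_n)$; hence the first index $N$ with $B(\sigma_N)\in T$ satisfies $\sigma_N=t_T(x_0,y_0)$ and $B(\sigma_N)=\zeta_T(x_0,y_0)$, while $\sigma_n<t_T$ for every $n<N$. Second, the two rules strictly alternate --- after the ``$Y\ne0$'' rule one has $Y=0$, and after the ``$|X|>1$'' rule one has $|X|=1$ and, the relevant Gaussian being a.s.\ nonzero, $Y\ne0$ --- and the strong Markov property together with (iii) shows that $\bigl(\sigma_n,B(\sigma_n)\bigr)_{n\ge0}$ is a Markov chain whose one-step transition kernel coincides exactly with the update $(t_{n-1},x_{n-1},y_{n-1})\mapsto(t_n,x_n,y_n)$ of the algorithm: after each $\sigma_{n-1}$ the increments of $B$ form a fresh planar Brownian motion independent of $\cF_{\sigma_{n-1}}$ (here $(\cF_t)$ is the natural filtration of $B$), and applying (iii) to the barrier then in force --- with $|Y(\sigma_{n-1})|$, respectively $|X(\sigma_{n-1})|-1$, in the role of $d$, the single fresh normal $g_n$ appearing in both the time- and the space-increment, and the auxiliary $g_n'$ scaling the displacement --- reproduces the displayed formulas. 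Since the algorithm's recursion is a random map driven by independent standard normals started at $(x_0,y_0)$, the sequence $(t_n,x_n,y_n)_{n\ge0}$ then has the same law as $(\sigma_n,B(\sigma_n))_{n\ge0}$, with $n_0$ corresponding to $N$.

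It remains to prove $N<\infty$ almost surely, which I expect to be the main obstacle. I would argue by contradiction on $\{N=\infty\}$. There the $\sigma_n$ are strictly increasing, each increment being an a.s.\ finite line-hitting time, and since $B(0)\notin T$ and $B$ meets $T$ at no $\sigma_n$ nor in between, $B$ avoids $T$ throughout $[0,\sigma_\infty)$, where $\sigma_\infty:=\lim_n\sigma_n\le\infty$. If $\sigma_\infty=\infty$, then $B$ never hits $T$ --- impossible, since the segment $T$ has positive logarithmic capacity, so planar Brownian motion started anywhere hits $T$ in finite time almost surely. If $\sigma_\infty<\infty$, then $B(\sigma_n)\to B(\sigma_\infty)$ by continuity; since the rules alternate infinitely often, infinitely many $B(\sigma_n)$ lie on $\{Y=0\}$ and infinitely many lie on $\{|X|=1\}$, forcing $B(\sigma_\infty)$ to satisfy both $Y=0$ and $|X|=1$, i.e.\ $B(\sigma_\infty)\in\{(1,0),(-1,0)\}$. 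But then $B$ visits a prescribed point at the positive time $\sigma_\infty$, which again has probability zero since a single point has zero capacity. Hence $\Pr(N=\infty)=0$.

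Transferring this along the coupling gives $n_0=N<\infty$ almost surely, $t_T(x_0,y_0)<\infty$, and $(t_{n_0},x_{n_0},y_{n_0})$ distributed as $(\sigma_N,B(\sigma_N))=(t_T(x_0,y_0),\zeta_T(x_0,y_0))$, as claimed. The remaining pieces --- the continuity arguments behind (i) and (ii), the elementary first-passage density in (iii), and the bookkeeping that the time- and space-increments legitimately share the same $g_n$ --- are routine; the crux is the finiteness statement $n_0<\infty$, and the argument above isolates exactly why it holds: an infinite run would force $B$ to hit an endpoint of the segment, a null event.
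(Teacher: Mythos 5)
Your proof is correct and follows essentially the same structure as the paper's: the three inputs you list --- that $B$ must cross $\{Y=0\}$ before hitting $T$ when $y\ne 0$, must cross $\{X=\sgn x\}$ before hitting $T$ when $y=0$ and $|x|>1$, and that the hitting time/position of a line is distributed as $\bigl(d^2/g_1^2,\ (d/|g_1|)g_2\bigr)$ --- are precisely the three observations the paper uses, and your realization of the algorithm via stopping times $\sigma_n$ along an actual Brownian path, glued by the strong Markov property, matches the paper's exposition. Where you go further is the finiteness step: the paper disposes of $n_0<\infty$ with the single remark that Brownian motion hits $T$ with probability one, but, as you correctly flag, that alone only yields $t_T<\infty$ and does not immediately exclude the scenario $N=\infty$ with $\sigma_\infty=\lim_n\sigma_n\le t_T<\infty$. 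Your dichotomy --- either $\sigma_\infty=\infty$, contradicting positive capacity of $T$, or $\sigma_\infty<\infty$, in which case the alternation of the two rules forces $B(\sigma_\infty)\in\{Y=0\}\cap\{|X|=1\}=\{(\pm1,0)\}$, a polar set --- cleanly closes this gap and is a genuine sharpening of the paper's argument. One small remark for completeness: the alternation of rules (2) and (3) should be stated to hold almost surely (the displacement $g'_n$ in rule (3) is a.s.\ nonzero, as you note, and equality $|x_n|=1$ after rule (2) occurs with probability zero), but that does not affect the conclusion.
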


\begin{proof} 

For a standard one dimensional brownian motion $b(t)$ started at $0$, the first time $t_x$ at which $b(t_x)=x$, for $x\neq 0$, satisfies the well known identity \cite[Theorem 2.21]{Brownian}:
$$\Pr( T_x \le t) \ = \ 2 *\Pr(b(t) \ge |x|) \ = \ 2 \int_{|x|}^\infty \frac{1}{\sqrt{2\pi t }} \e^{-\frac{y^2}{2t}} \di y \ = \ 2 \int_{\nicefrac{|x|}{\sqrt{t}}}^\infty  \frac{1}{\sqrt{2\pi}} \e^{-\frac{y^2}{2}} \di y.$$
Thus
$$\Pr( s \sqrt{T_x} \le |x|  ) \ = \ \Pr (g \ge s)$$
where $g$ is a standard normal random variable. It follows that $T_x$ has the same distribution as $ \nicefrac{x^2}{g^2}$. 

For planar Brownian motion $B$, with $B(0)= (x, y)$, it follows that the first time $B$ hits a given line $L$ is distributed as $ \nicefrac{d^2}{g^2}$ where $d=\dist((x,y),L)$ and $g$ is a standard normal.  Furthermore, at time $t=t_L(x,y)$ we have $B(t)=P_L(x,y) + \sqrt{t_X(x,y)} g' \wh{n}_L$ where $P_L(x,y)$ is the orthogonal projection of $(x,y)$ onto $L$, $\wh{n}_L$ is a unit vector parallel to $L$ and $g'$ is a standard normal independent of $g$.

Suppose $y_0>0$.  We see that $(x_1,y_1)$ has the same distribution as the first point of the $X$-axis hit by $B$, while $t_1$ has the same distribution as the first hitting time of the $X$-axis. If it happens that $|x_1|\le 1$ (so $(x_1,y_1)\in T$) then  we have $x_n=x_1$, $y_n=0$ and $t_n=t$ for all subsequent $n$.

On the other hand if $y_0 = 0$ then there are two cases $|x_0|\le 1$ or $|x_0|> 1$.  In the first case $(x_n,y_n,t_n)=(x_0,0,0)$ for all $n$ corresponding to the fact that the initial point is already in the trap $T$.  In the second case, in order for the Brownian motion $B$ to hit $T$ it must first hit the line $x= \nicefrac{x_0}{|x_0|}$, since this line separates the initial point $(x_0,0)$ from $T$.  The point $(x_1,y_1)$ has the distribution of the first point of this line hit by $B$, while $t_1$ has the distribution of the first hitting time.  Note that with probability one $y_1 \neq 0$.

By the strong Markov property of Brownian motion,  the distribution of $B(t+t_1)$ is the same as a new Brownian motion starting at the point $(x_1,y_1)$.  Repeating the above analysis we see that up until the point when $(x_n,y_n)$ is in $T$, the points $(x_n,y_n)$ alternate between the $X$-axis and  the lines $x=\pm 1$ and share the distribution of Brownian motion at the corresponding stopping times. The result follows since Brownian motion hits $T$ with probability one.
\end{proof}

\section{Controlled asymptotics for $K_0$}\label{sec:K0} The order zero modified Bessel function $K_0$ has the form \cite[\S10.31]{DLMF}
$$ K_0(x) \ = \ -  \left ( \ln \frac{x}{2} + \gamma  \right ) I_0(x)  + \Psi(x)$$
where $I_0(x)$ is the order zero modified Bessel function of the first kind
$$I_0(x) \ = \ \sum_{n=0}^\infty \frac{1}{n!^2} \left ( \frac{x^2}{4} \right )^n \ ,$$
and $\Psi$ has a convergent power series
$$\Psi(x)  \ = \ \sum_{n=1}^\infty \frac{h_n}{n!^2} \left( \frac{x^2}{4} \right )^n ,$$
with $h_n$  the $n$-th harmonic number
$$h_n \ = \ 1 + \frac{1}{2} + \cdots \frac{1}{n}.$$
The following Theorem gives bounds on the error that results from truncating these series.
\begin{theorem}\label{theorem:K0theorem} 
The following estimates hold for $K_0$.  For any $M=0,1,2,\ldots$:
\begin{enumerate}
\item If $x>0$, then
\begin{equation}\label{eq:K0lowerbound}
	K_0(x) \ \ge \ - \left (  \ln \frac{x }{2} + \gamma \right ) - \sum_{n=1}^{M} \frac{1}{n!^2} \left ( \frac{x^2}{4} \right )^n \left (  \ln \frac{x }{2} + \gamma -h_n \right )  ;
\end{equation}
\item If $x < 2\e^{-\gamma}$, then
 \begin{multline}\label{eq:K0upperbound}
	 K_0(x) \ \le \ - \left (  \ln \frac{x }{2} + \gamma \right ) - \sum_{n=1}^{M} \frac{1}{n!^2} \left ( \frac{x^2}{4} \right )^n \left (  \ln \frac{x }{2} + \gamma -h_n \right )  \\
	+  \ \frac{I_0(x)}{2\pi}\, \frac{\e^{2M+2}}{M+1} \left ( \frac{x}{2(M+1)} \right )^{2M+2} \abs{\ln \frac{x}{2(M+1)}}.
\end{multline}
\item If $2\e^{-\gamma} \le  x < 2\e^{h_M-\gamma}$, then
\begin{multline}\label{eq:K0upperboundbigx}
	 K_0(x) \ \le \ - \left (  \ln \frac{x }{2} + \gamma \right ) - \sum_{n=1}^{M} \frac{1}{n!^2} \left ( \frac{x^2}{4} \right )^n \left (  \ln \frac{x }{2} + \gamma -h_n \right )  \\
	+  \ \frac{I_0(x)}{2\pi}\, \frac{\e^{2M+2}(\gamma+\ln (M+1))}{M+1} \left ( \frac{x}{2(M+1)} \right )^{2M+2} .
\end{multline}
\end{enumerate}
\end{theorem}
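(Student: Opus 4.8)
The plan is to reduce all three inequalities to statements about the tail of the defining power series for $K_0$, and then handle the lower bound by a rearrangement argument and the upper bounds by term‑by‑term estimation.

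\emph{Reduction.} Substituting the series for $I_0$ into the relation $K_0(x)=-(\ln\frac{x}{2}+\gamma)I_0(x)+\Psi(x)$ and collecting powers of $x$ gives
\[
K_0(x)\ =\ -\left(\ln\frac{x}{2}+\gamma\right)\ -\ \sum_{n=1}^{\infty}\frac{1}{n!^2}\left(\frac{x^2}{4}\right)^{n}\left(\ln\frac{x}{2}+\gamma-h_n\right),
\]
so each of \eqref{eq:K0lowerbound}, \eqref{eq:K0upperbound}, \eqref{eq:K0upperboundbigx} is an assertion about the remainder
\[
R_M(x)\ :=\ \sum_{n=M+1}^{\infty}c_n\bigl(h_n-\gamma-\ln\frac{x}{2}\bigr),\qquad c_n:=\frac{1}{n!^2}\left(\frac{x^2}{4}\right)^{n}\ge 0;
\]
explicitly, \eqref{eq:K0lowerbound} is the claim $R_M(x)\ge 0$ for all $x>0$, while \eqref{eq:K0upperbound} and \eqref{eq:K0upperboundbigx} are upper bounds on $R_M(x)$ in the indicated ranges. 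Note that $\sum_{n\ge 0}c_n=I_0(x)$ and $\sum_{n\ge 0}c_n h_n=\Psi(x)$ (the $n=0$ term vanishes since $h_0=0$).

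\emph{Lower bound.} I would prove $R_M(x)\ge 0$ by a Chebyshev‑type rearrangement. Since $(h_n)$ is nondecreasing, the $c_n$‑weighted mean of $h_n$ over $\{n>M\}$ is at least $h_{M+1}$, while over $\{n\le M\}$ it is at most $h_M\le h_{M+1}$; hence the mean over the ``upper'' set dominates the overall mean. Writing $B_2=\sum_{n>M}c_n>0$,
\[
\frac{1}{B_2}\sum_{n>M}c_nh_n\ \ge\ \frac{\sum_{n\ge 0}c_nh_n}{\sum_{n\ge 0}c_n}\ =\ \frac{\Psi(x)}{I_0(x)}\ =\ \ln\frac{x}{2}+\gamma+\frac{K_0(x)}{I_0(x)},
\]
the last equality being the defining relation for $\Psi$ again. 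Therefore $R_M(x)=B_2\bigl(\frac{1}{B_2}\sum_{n>M}c_nh_n-\gamma-\ln\frac{x}{2}\bigr)\ge B_2\,K_0(x)/I_0(x)\ge 0$, which is \eqref{eq:K0lowerbound}.

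\emph{Upper bounds.} First I would check that in the stated ranges every term of $R_M$ is nonnegative: for \eqref{eq:K0upperbound}, $x<2\e^{-\gamma}$ gives $\ln\frac{x}{2}+\gamma<0\le h_n$; for \eqref{eq:K0upperboundbigx}, $x<2\e^{h_M-\gamma}$ gives $\ln\frac{x}{2}+\gamma<h_M\le h_{M+1}\le h_n$ for $n\ge M+1$. Pulling out the leading coefficient,
\[
R_M(x)\ =\ \frac{1}{(M+1)!^2}\left(\frac{x^2}{4}\right)^{M+1}\sum_{m\ge 0}\frac{(M+1)!^2}{(M+1+m)!^2}\left(\frac{x^2}{4}\right)^{m}\bigl(h_{M+1+m}-\gamma-\ln\frac{x}{2}\bigr),
\]
I would bound the residual series: the ratio estimate $(M+1)!^2/(M+1+m)!^2\le 1/m!^2$ lets one compare it with $I_0(x)$, while the harmonic‑number bounds $\gamma+\ln n<h_n\le 1+\ln n$ and $h_{M+1+m}-h_{M+1}\le\ln(1+\frac{m}{M+1})$ let one replace the factor $h_{M+1+m}-\gamma-\ln\frac{x}{2}$ by $\bigl|\ln\frac{x}{2(M+1)}\bigr|$ in the range of \eqref{eq:K0upperbound}, resp.\ by $\gamma+\ln(M+1)$ in the range of \eqref{eq:K0upperboundbigx}, up to controlled corrections. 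Finally I would insert the Stirling lower bound $(M+1)!\ge\sqrt{2\pi(M+1)}\bigl(\frac{M+1}{\e}\bigr)^{M+1}$ to rewrite $\frac{1}{(M+1)!^2}\bigl(\frac{x^2}{4}\bigr)^{M+1}$ as at most $\frac{\e^{2M+2}}{2\pi(M+1)}\bigl(\frac{x}{2(M+1)}\bigr)^{2M+2}$, obtaining the stated forms.

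\emph{Main obstacle.} The reduction and the lower bound are essentially bookkeeping plus the one clean observation about weighted averages; the real work is concentrated in the upper bounds. There one must verify that the crude term‑by‑term estimates — the ratio bound $(M+1)!^2/(M+1+m)!^2\le 1/m!^2$, which is wasteful for small $M$ and may need sharpening to $(M+2)^{-2m}$; the harmonic‑number inequalities; and the slack in Stirling's formula — fit together to give exactly the stated coefficients rather than something a bounded factor too large. The margins are thin near the endpoints of the $x$‑ranges and for the smallest values of $M$, so a few low‑order cases may need to be treated by direct estimation.
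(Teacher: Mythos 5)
Your reduction to the remainder $R_M(x)=\sum_{n>M}c_n\bigl(h_n-\gamma-\ln\tfrac{x}{2}\bigr)$ with $c_n=\tfrac{1}{n!^2}(\tfrac{x^2}{4})^n$ is exactly what the paper does. Your proof of the lower bound \eqref{eq:K0lowerbound} is correct but takes a genuinely different route: the paper splits into cases ($x\ge 2\e^{h_M-\gamma}$, where the truncated sum is $\le 0\le K_0$, versus $x<2\e^{h_M-\gamma}$, where every tail term is nonnegative), whereas your Chebyshev-type averaging argument gives $R_M(x)\ge B_2 K_0(x)/I_0(x)\ge 0$ for all $x>0$ in one stroke. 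That is a clean and arguably tidier observation, and it even yields a quantitative lower bound on $R_M$ for free.

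The upper bounds, however, have a real gap, and you have put your finger on it yourself. After extracting $\tfrac{1}{(M+1)!^2}(\tfrac{x^2}{4})^{M+1}$ and applying your ratio bound, the estimate you propose is
\begin{equation*}
R_M(x)\ \le\ \frac{(\tfrac{x^2}{4})^{M+1}}{(M+1)!^2}\left[\,\abs{\ln\tfrac{x}{2(M+1)}}\,I_0(x)\ +\ \sum_{m\ge 0}\frac{1}{m!^2}\Bigl(\tfrac{x^2}{4}\Bigr)^{m}\bigl(h_{M+1+m}-h_{M+1}\bigr)\right],
\end{equation*}
and the second sum is a strictly positive quantity not covered by the stated coefficient; nothing in the sketch makes it vanish. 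The paper closes this gap by a specific algebraic trick you do not have: because $h_n/n$ is \emph{decreasing} in $n$ (from $(n+1)h_n\ge n h_{n+1}$, valid since $h_n\ge \tfrac{n}{n+1}$), one writes
\begin{equation*}
\frac{h_n}{n!^2}=\frac{1}{n!(n-1)!}\cdot\frac{h_n}{n}\ \le\ \frac{1}{n!(n-1)!}\cdot\frac{h_{M+1}}{M+1},
\qquad
\frac{-\ln\tfrac{x}{2}-\gamma}{n!^2}\ \le\ \frac{1}{n!(n-1)!}\cdot\frac{-\ln\tfrac{x}{2}-\gamma}{M+1},
\end{equation*}
so that the growth of $h_n$ is entirely absorbed into the downgrade of one factorial from $n!$ to $(n-1)!$. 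Then, with $n=M+1+k$, the binomial inequality $(M+1+k)!\,(M+k)!\ge (M+1)!\,M!\,k!^2$ gives
\begin{equation*}
\sum_{n\ge M+1}\frac{1}{n!(n-1)!}\Bigl(\tfrac{x^2}{4}\Bigr)^{n}\ \le\ \frac{M+1}{(M+1)!^2}\Bigl(\tfrac{x^2}{4}\Bigr)^{M+1}I_0(x),
\end{equation*}
and the factor $M+1$ cancels against the $1/(M+1)$ above, producing \emph{exactly} the claimed coefficient before Stirling; Stirling then converts $\tfrac{1}{(M+1)!^2}(\tfrac{x^2}{4})^{M+1}$ into $\tfrac{\e^{2M+2}}{2\pi(M+1)}\bigl(\tfrac{x}{2(M+1)}\bigr)^{2M+2}$ with no slack to absorb. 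The third range \eqref{eq:K0upperboundbigx} is the same calculation after discarding the now-negative $-\ln\tfrac{x}{2}-\gamma$ term. Without the $h_n/n$ monotonicity step your proposal does not reach the stated constants, and the ``a few low-order cases by direct estimation'' fallback you mention would not help since the discrepancy is present for every $M$.
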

\begin{remark} In the proofs of Theorems \ref{theorem:new} and \ref{theorem:Hunt}, we have used only the case $M=0$ of these bounds, for which the  estimates imply
\begin{equation}\label{eq:K0lowerboundM=0}
- \left (\ln \frac{x}{2} + \gamma \right ) \ \le \ K_0(x)	
\end{equation}
for all $x>0$ and
\begin{equation}\label{eq:K0upperboundM=0}
K_0(x) \ \le \ 	- \left (\ln \frac{x}{2} + \gamma \right ) +  0.79 \  \left ( \frac{x^2}{4} \right ) \abs{ \ln \frac{x^2}{4}}
\end{equation}
for $x < 2 \e^{-\gamma}$.  In \eqref{eq:K0upperboundM=0}, we have used the fact that $I_0$ is increasing so that
$$\frac{I_0(x) \e  }{4 \pi} \ \le \ \frac{I_0(2\e^{-\gamma}) \e^2  }{4\pi} \ = \ 0.7885 \cdots \ \le \ 0.79.$$
\end{remark}
\begin{proof} Let $M$ be fixed and let 
$$\Phi(x) = - \left (  \ln \frac{x }{2} + \gamma \right ) + \sum_{n=1}^{M} \frac{1}{n!^2} \left ( \frac{x^2}{4} \right )^n \left (  -\ln \frac{x }{2} - \gamma + h_n \right ).$$
Since $h_n$ increases with $n$, if 
$$x \ge  2 \e^{-\gamma} \e^{h_M}$$
then every term contributing to $\Phi(x)$ is non-positive, so $\Phi(x) \le 0$ and the bound $\Phi(x) < K_0(x)$ is trivial.  However, for $x < 2 \e^{-\gamma} \e^{h_M}$ 
\begin{equation}
K_0(x) - \Phi(x) \ = \ \sum_{n=M+1}^\infty \frac{1}{n!^2} \left ( \frac{x^2}{4} \right )^n \left (  -\ln \frac{x }{2} - \gamma + h_n \right ) \label{eq:K0-Phi}
\end{equation}
is a sum of positive terms and thus, $K_0(x) \ge \Phi(x)$.  Eq.\ \eqref{eq:K0lowerbound} follows.

To derive the upper bound \eqref{eq:K0upperbound}, fix $x < 2 \e^{-\gamma}$. Since 
$$(n+1)h_n \ = \ n h_n + h_n \ = \ n h_{n+1} + h_n - \frac{n}{n+1} \ \ge \ n h_{n+1},$$
it follows that $\nicefrac{h_n}{n}$ is decreasing with $n$.  Thus
$$
	K_0(x) - \Phi(x) \ \le  \ \left ( \frac{x^2}{4} \right )^{M+1} \sum_{n=M+1}^\infty  \frac{1}{n!(n-1)!} \left ( \frac{x^2}{4} \right )^{n-M-1} \left ( \frac{ - \ln \frac{x }{2} - \gamma}{n} + \frac{h_{M+1}}{M+1}  \right )
$$
Since  $ h_{M+1} \ \le \ \gamma + \ln (M+1) $, $(n+k)! \ge k! n!$ and $-\ln \nicefrac{x}{2} -\gamma > 0$, we conclude that
\begin{multline*} K_0(x) - \Phi(x) \ \le \  \frac{-\ln \frac{x}{2} +\ln (M+1)}{(M+1)!^2 } \left ( \frac{x^2}{4} \right )^{M+1} \sum_{n=0}^\infty \frac{1}{n!^2} \left( \frac{x^2}{4} \right )^{n} \\
\ = \ \frac{-\ln \frac{x}{2} +\ln (M+1)}{(M+1)!^2 } \left ( \frac{x^2}{4} \right )^{M+1} I_{0}(x).
\end{multline*}

By Stirling's approximation
$$ n!  \ \ge \ \sqrt{2\pi} n^{n+\frac{1}{2}} \e^{-n}$$
we see that
$$\frac{1}{(M+1)!^2} \left ( \frac{x^2}{4} \right )^{M+1} \ \le  \ \frac{e^{2M+2}}{2\pi(M+1)} \left ( \frac{x}{2(M+1)} \right )^{2M+2}.$$
Eq.\ \eqref{eq:K0upperbound} follows.

In the range $2 \e^{-\gamma}\le x < 2\e^{h_M -\gamma}$, we have $-\ln \frac{x}{2} - \gamma < 0$ so 
\begin{multline*}K_0(x) - \Phi(x) \ \le \ 
	\left ( \frac{x^2}{4} \right )^{M+1} \sum_{n=M+1}^\infty  \frac{1}{n!(n-1)!} \left ( \frac{x^2}{4} \right )^{n-M-1} \ \frac{h_{M+1}}{M+1} \\
	\le \ \frac{\ln (M+1)}{(M+1)!^2 } \left ( \frac{x^2}{4} \right )^{M+1} I_{0}(x)\ \le \ \frac{I_0(x)}{2\pi}\frac{e^{2M+2}(\gamma+\ln (M+1))}{M+1} \left ( \frac{x}{2(M+1)} \right )^{2M+2},
\end{multline*}
from which eq.\ \eqref{eq:K0upperboundbigx} follows.
\end{proof}

\end{document}